\theoremstyle{change}
\providecommand{\U}[1]{\protect\rule{.1in}{.1in}}
\theoremstyle{plain}
\newtheorem{thm}{Theorem}[section]
\newtheorem{cor}[thm]{Corollary}
\newtheorem{lem}[thm]{Lemma}
\theoremstyle{definition}
\theoremstyle{remark}
\def\proofsymbol{\rule{0.5em}{0.5em}}
\newtheorem{rem}[thm]{Remark}
\newtheorem{examples}[thm]{Examples}
\theoremstyle{nonumberplain}
\newtheorem{proof}{Proof}
\theoremstyle{empty}
\newtheorem{proofof}{}
\newcommand{\C}{\mathbb{C}}
\newcommand{\R}{\mathbb{R}}
\newcommand{\N}{\mathbb{N}}
\newcommand{\calC}{\mathcal{C}}
\newcommand{\calR}{\mathcal{R}}
\newcommand{\calUI}{\mathcal{UI}}
\newcommand{\DD}{\mathbf{D}}
\def\im{{\sf Im}}
\def\re{{\sf Re}}
\newcommand\sign{{\sf sign}}
\renewcommand{\leqq}{\leqslant}
\renewcommand{\geqq}{\geqslant}
\begin{document}
\title{The normal distribution is freely selfdecomposable}
\author{Takahiro Hasebe\\Department of Mathematics,\\ Hokkaido University \\thasebe@math.sci.hokudai.ac.jp
\and Noriyoshi Sakuma\\ Department of Mathematics,\\ Aichi University of Education \\sakuma@auecc.aichi-edu.ac.jp
\and Steen Thorbj{\o}rnsen\\ Department of Mathematics,\\ University of Aarhus\\steenth@math.au.dk}
\date{\today}
\maketitle

\begin{abstract}
The class of selfdecomposable distributions in free probability theory
was introduced by Barndorff-Nielsen and the third named author. 
It constitutes a fairly large subclass of the freely infinitely divisible
distributions, but so far specific examples have been limited to
Wigner's semicircle distributions, the free stable distributions,
two kinds of free gamma distributions and a few other examples.
In this paper, we prove that the (classical) normal
distributions are freely selfdecomposable. More generally it is
established that the Askey-Wimp-Kerov distribution $\mu_c$ is freely
selfdecomposable for any $c$ in $[-1,0]$. The main ingredient in the proof
is a general characterization of the freely selfdecomposable distributions in
terms of the derivative of their free cumulant transform.
\end{abstract}

\emph{2010 Mathematics subject classification}: Primary 46L54,
secondary 60E07.

\smallskip

\emph{Keywords}: Free infinite divisibility, free selfdecomposability,
free cumulant transform, normal distributions, Askey-Wimp-Kerov
distributions.

\section{Introduction}
Infinitely divisible distributions and L\'evy processes have
constituted a major role in the development of probability theory for
more than eighty years (see \cite{Sato99} for some main aspects). 
Following Voiculescu's foundation of free probability theory in the
early 1980's he further introduced the class of infinitely
divisible distributions with respect to free additive convolution
$\boxplus$ (see \cite{VDN,BeVo1993}). We denote this class by
$I(\boxplus)$, and refer to its members as freely infinitely divisible
(FID) distributions. As in classical probability the FID distributions
can be characterized as those admitting a L{\'e}vy-Khintchine representation
of the free analog of the cumulant transform. This was established by 
Bercovici and Voiculescu in \cite{BeVo1993}. Specifically
the free cumulant transform $\calC_\mu$ of a (Borel-) probability measure
$\mu$ on $\R$ is defined in terms of its
Cauchy-Stieltjes transform $G_\mu$ given by 
\begin{equation*}
G_\mu(z)=\int_{\R}\frac{1}{z-t}\,\mu(dt), \qquad(z\in\C^+),
\end{equation*}
where $\C^+$ (resp.\ $\C^-$) denotes the set of complex numbers with
strictly positive (resp.\ strictly negative) imaginary part.
Note in particular that $\im(G_\mu(z))<0$ for any $z$ in $\C^+$, and
hence we may consider the reciprocal Cauchy transform
$F_\mu\colon\C^{+}\to\C^{+}$ given by $F_{\mu}(z)=1/G_{\mu}(z)$ for $z$ in $\C^+$.
For any probability measure $\mu$ on $\R$ and any $\lambda$ in
$(0,\infty)$ there exist 
positive numbers $\alpha,\beta$ and $M$ such that $F_{\mu}$ is univalent  
on the set $\Gamma_{\alpha,\beta}:=\{z \in \C^{+} \,|\, \im(z) >\beta,
|\re(z)|<\alpha \im(z)\}$ and such that 
$F_{\mu}(\Gamma_{\alpha,\beta})\supset\Gamma_{\lambda,M}$. 
Therefore the right inverse $F^{-1}_{\mu}$ of $F_{\mu}$ exists on
$\Gamma_{\lambda,M}$, and the free cumulant transform
$\calC_\mu$ can be defined by 
\begin{align}
\calC_{\mu}(w) =wF^{-1}_{\mu}(1/w)-1, \quad\text{for all $w$ such that
  $1/w \in \Gamma_{\lambda,M}$}.
\label{def_Cmu_eq}
\end{align}
The name refers to the fact that $\calC_\mu$ linearizes free additive
convolution (cf.\ \cite{BeVo1993}). Variants of $\calC_\mu$ (with the
same linearizing property) are the $R$-transform $\calR_\mu$ and the
Voiculescu transform $\varphi_\mu$ related by the following equalities:
\begin{equation}
\calC_\mu(w)=w\calR_\mu(w)=w\varphi_\mu(\tfrac{1}{w}).
\label{relations_eq}
\end{equation}
The free version of the L\'evy-Khintchine representation now amounts
to the statement that a probability measure $\mu$ on $\R$
is in $I(\boxplus )$, if and only if there exist $a\geqq 0$,
$\eta\in\R$ and a L{\'e}vy measure\footnote{A (Borel-) measure $\nu$
  on $\R$ is called a L\'evy measure, if $\nu(\{0\})=0$ and
  $\int_{\R}\min\{1,x^2\}\,\nu(dx)<\infty$.}
$\nu$ such that  
\begin{align}
\calC_{\mu}(w) = a w^{2}+\eta w +
\int_{\R}\left(\frac{1}{1- w x}-1-w x 1_{[-1,1]}(x)\right)\nu(dx).
\label{eqno1}
\end{align}
The triplet $(a,\eta,\nu)$ is uniquely determined and referred to as
the \emph{free characteristic triplet} for $\mu$, and $\nu$ is referred to as the \emph{free L\'evy measure} for $\mu$. In terms of the Voiculescu
transform $\varphi_\mu$ the free L\'evy-Khintchine representation takes
the form:
\begin{equation}
\label{eqno1a}
\varphi_{\mu}(z)=\gamma+\int_{{\mathbb R}}\frac{1+tz}{z-t}\, \sigma(dt),
\qquad (z\in{\mathbb C}^+),
\end{equation}
where the \emph{free generating pair} $(\gamma,\sigma)$ is uniquely
determined and related to the free characteristic triplet by the
formulas:
\begin{equation}
\begin{split}
\nu({\rm d}t)&=\frac{1+t^2}{t^2}\cdot 1_{{\mathbb R}\setminus\{0\}}(t) \
\sigma({\rm d}t),\\ 
\eta&=\gamma+\int_{{\mathbb R}}t\Big(1_{[-1,1]}(t)-\frac{1}{1+t^2}\Big) \
\nu({\rm d}t), \\
a&=\sigma(\{0\}).
\end{split}
\label{eqno1b}
\end{equation}
In particular $\sigma$ is a finite measure.
The right hand side of \eqref{eqno1a} gives rise to an analytic
function defined on all of $\C^+$, and in fact the property that
$\varphi_\mu$ can be extended analytically to all of $\C^+$ also
characterizes the measures in $I(\boxplus)$. More precisely Bercovici
and Voiculescu established in \cite{BeVo1993} the following
fundamental result:

\begin{thm}
A probability measure $\mu$ on $\R$ is in $I(\boxplus)$, if and only if
the Voiculescu transform $\varphi_{\mu}$ has an analytic extension
defined on $\C^{+}$ with values in $\C^{-}\cup \R$.
\label{BV_char_of_FID}
\end{thm}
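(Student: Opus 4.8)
\smallskip
\noindent\textit{Plan of proof.} The substance of the statement is already contained in the free L\'evy--Khintchine representation \eqref{eqno1a}; the plan is to match the analytic functions of that shape with the analytic maps $\C^+\to\C^-\cup\R$ that arise as Voiculescu transforms. For the ``only if'' direction, suppose $\mu\in I(\boxplus)$. By \eqref{eqno1a} there are $\gamma\in\R$ and a finite positive Borel measure $\sigma$ on $\R$ with $\varphi_\mu(z)=\gamma+\int_\R\frac{1+tz}{z-t}\,\sigma(dt)$ for all $z$ in some truncated cone $\Gamma_{\lambda,M}$. I would first check that the right-hand side defines an analytic function on \emph{all} of $\C^+$: on any compact $K\subset\C^+$ one has $\im z\geqq\delta>0$, hence $|z-t|\geqq\delta$ and $\sup_{z\in K}\sup_{t\in\R}\bigl|\tfrac{1+tz}{z-t}\bigr|<\infty$, so the integral converges locally uniformly and Morera's theorem (with Fubini and Cauchy's theorem, using that $z\mapsto\frac{1+tz}{z-t}$ is analytic on $\C^+$) applies. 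By the identity theorem this is the unique analytic extension of $\varphi_\mu$ to $\C^+$, and the elementary identity
\begin{equation*}
\im\Bigl(\frac{1+tz}{z-t}\Bigr)=-\frac{(1+t^2)\,\im z}{|z-t|^2}\leqq 0\qquad(z\in\C^+,\ t\in\R)
\end{equation*}
forces $\im\varphi_\mu(z)\leqq 0$ on $\C^+$, i.e.\ the extension takes values in $\C^-\cup\R$.

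For the ``if'' direction, assume $\varphi_\mu$ extends to an analytic map $\Phi\colon\C^+\to\C^-\cup\R$. Then $-\Phi$ is a Nevanlinna function (analytic on $\C^+$ with nonnegative imaginary part), so the Nevanlinna representation theorem supplies $a_0\in\R$, $b_0\geqq 0$ and a finite positive Borel measure $\sigma$ on $\R$ with
\begin{equation*}
\Phi(z)=-a_0-b_0 z+\int_\R\frac{1+tz}{z-t}\,\sigma(dt)\qquad(z\in\C^+).
\end{equation*}
It then suffices to show $b_0=0$, for then, setting $\gamma=-a_0$, the function $\varphi_\mu$ has exactly the form \eqref{eqno1a} and hence $\mu\in I(\boxplus)$. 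To force $b_0=0$ I would invoke the general asymptotic property of Voiculescu transforms that $\varphi_\mu(iy)/(iy)\to 0$ as $y\to\infty$; this follows from $F_\mu(iy)/(iy)\to 1$ (valid for every probability measure, since $iy\,G_\mu(iy)\to 1$), whence $F_\mu^{-1}(iy)/(iy)\to 1$ on the relevant cone, together with $\varphi_\mu(z)=F_\mu^{-1}(z)-z$ coming from \eqref{def_Cmu_eq} and \eqref{relations_eq}. Dividing the displayed representation by $iy$ and letting $y\to\infty$, the terms $-a_0/(iy)$ and $\tfrac{1}{iy}\int_\R\tfrac{1+ity}{iy-t}\,\sigma(dt)$ tend to $0$ (the latter by dominated convergence: the integrand tends to $0$ pointwise and is uniformly bounded for $y\geqq 1$, and $\sigma$ is finite), so $\Phi(iy)/(iy)\to -b_0$; comparison with $\Phi(iy)/(iy)=\varphi_\mu(iy)/(iy)\to 0$ gives $b_0=0$.

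I expect the only genuinely delicate point to be the asymptotic input $\varphi_\mu(iy)=o(y)$ used to eliminate the linear term $b_0z$; everything else is bookkeeping around the Nevanlinna representation and the two imaginary-part computations. If one prefers not to quote that asymptotics, one can argue directly instead: $b_0>0$ would force $F_\mu^{-1}(iy)=iy+\Phi(iy)\sim(1-b_0)\,iy$, which is incompatible both with $F_\mu^{-1}$ mapping its cone into $\C^+$ and with $F_\mu(w)/w\to 1$ as $w\to\infty$ in a Stolz angle, so $b_0$ must vanish. Either route completes the equivalence.
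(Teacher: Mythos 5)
Your argument is correct, but note first that the paper does not prove this theorem at all: it is quoted from Bercovici--Voiculescu \cite{BeVo1993} as a known result, so there is no in-paper proof to compare against. What you have actually proved is the equivalence of the two characterizations of $I(\boxplus)$ that the paper states back to back --- the free L\'evy--Khintchine form \eqref{eqno1a} and the analytic-extension property --- rather than the equivalence of either one with the definition of free infinite divisibility. That is a legitimate reduction given how the paper is organized (it presents \eqref{eqno1a} as established before stating the theorem), but be aware that in \cite{BeVo1993} the two statements are proved together, and the genuinely hard content (the construction of the semigroup $\mu^{\boxplus t}$ and the Julia--Carath\'eodory-type analysis of $F_\mu$) lives precisely in the step your proof takes as input. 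Within that reduction both directions are sound: the identity $\im\bigl(\frac{1+tz}{z-t}\bigr)=-\frac{(1+t^2)\,\im(z)}{|z-t|^2}$ is correct and gives the forward implication, and for the converse the Nevanlinna representation of $-\varphi_\mu$ combined with the asymptotics $\varphi_\mu(iy)=o(y)$ (which the paper itself invokes later as \cite[Proposition~5.6]{BeVo1993}, and which you rightly flag as the one nontrivial analytic input, since it requires transferring $F_\mu(z)=z(1+o(1))$ to $F_\mu^{-1}$ on a Stolz angle) eliminates the linear term $b_0z$ and lands you exactly in the form \eqref{eqno1a}. One small caveat on your closing alternative: for $0<b_0<1$ the point $(1-b_0)iy$ still lies in $\C^+$, so the asserted incompatibility with $F_\mu^{-1}$ mapping into $\C^+$ does not by itself exclude small positive $b_0$; only the comparison with $F_\mu(w)/w\to1$ does.
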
  

Research on FID distributions developed rapidly since 1999, when
Bercovici and Pata introduced and studied a natural bijection between
the classes of classically and freely infinitely divisible distributions
(see \cite{BePa1999} and \cite{BNTh2004}). As a natural step in this
development the class of \emph{freely selfdecomposable} (FSD) distributions
was introduced in \cite{BNTh2002}.
A probability distribution $\mu$ on $\R$ is said to be FSD, if, for any
$c$ in $(0,1)$ there  exists a probability measure $\rho_{c}$ such that
$\mu=\DD_{c}(\mu)\boxplus \rho_{c}$, where $\DD_c(\mu)$ denotes the
scaling of $\mu$ by the constant $c$. We denote the set of all freely
selfdecomposable distributions by $L(\boxplus )$.
Chistyakov and Goetze \cite[Theorem 2.8]{CG2008} identified the class $L(\boxplus)$ with the set of possible weak limits of 
\begin{equation}\label{SDL}
\delta_{a_n} \boxplus \DD_{b_n}(\mu_1 \boxplus \mu_2 \boxplus \cdots \boxplus \mu_n), \qquad n=1,2,3,\dots, 
\end{equation}
where $a_n\in\R, b_n>0$ and $\mu_1,\mu_2,\dots$ are probability
measures on $\R$ such that $\{\DD_{b_n}(\mu_k)\}_{1\leq k \leq n,
  1\leq n}$ forms an infinitesimal array. This is in complete analogy
with the classical limit theorem for (classically) selfdecomposable
distributions (see e.g.\ the book of Gnedenko and Kolmogorov
\cite{GK68}). 

If $\mu$ is FSD then $\mu$ is automatically FID (see \cite{BNTh2002}),
and therefore has a L{\'e}vy-Khintchine representation. 
The FSD distributions can then (in full analogy with selfdecomposability in
classical probability) be characterized as the FID measures for
which the free L\'evy measure $\nu$ (appearing in the free characteristic
triplet) takes the form: 
\begin{equation}
\nu(dx)=\frac{k(x)}{|x|}1_{\R\setminus\{0\}}(x)\, dx,
\label{eqno2}
\end{equation}
where the function $k\colon\R\setminus\{0\}\to[0,\infty)$
is non-decreasing on $(-\infty,0)$ and non-increasing on $(0,\infty)$. 
From this characterization one can readily list a number of examples of
FSD distributions.

\begin{examples}
\begin{enumerate}[\rm(i)]

\item For any $a$ in $\R$ and $r$ in $(0,\infty)$ the semi-circle
  distribution centered at $a$ and of radius $r$ is the probability
  measure $\gamma_{a,r}$ given by
\[
\gamma_{a,r}(dt)=\frac{2}{\pi r^2}\sqrt{r^2-(t-a)^2}1_{[a-r,a+r]}(t)\,dt.
\]
These distributions are freely selfdecomposable, as $\gamma_{a,r}$ has
free characteristic triplet $(\frac{r^2}{4},a,0)$. 

\item The free stable distributions with index $\alpha \in (0,2)$ are FSD, as
they have free characteristic triplets $(0,\eta,\nu)$, where $\nu$ has the
form \eqref{eqno2} with 
$$
k(x)=cx^{-\alpha}1_{(0,\infty)}(x)+c'|x|^{-\alpha} 1_{(-\infty,0)}(x),
$$
and $c,c'$ are parameters in $[0,\infty)$. The main distributional
properties of the free stable 
distributions were uncovered by Biane in the appendix to \cite{BePa1999}.

\item
The free Meixner distributions have been studied intensely by e.g.\
Saitoh and Yoshida \cite{SaYo2001}, Anshelevich \cite{An2003} 
and Bryc and Bo{\.z}ejko \cite{BB2006}. In \cite{BB2006} these
distributions are 
introduced as the two-parameter family 
$\{\mu_{a,b}\mid a\in\R, b\geqq -1\}$
of probability measures with Cauchy-Stieltjes transforms given by
\[
G_{\mu_{a,b}}(z)=\frac{(1+2b)z+a-\sqrt{(z-a)^2-4(1+b)}}{2(bz^2+az+1)},
  \qquad(z\in\C^+). 
\]
More generally all \emph{increasing} affine transformations of the
measures $\mu_{a,b}$ 
are also referred to as free Mexiner distributions. It was shown in
\cite{SaYo2001} that $\mu_{a,b}$ is
$\boxplus$-infinitely divisible when $b\geqq0$. 
If $a=b=0$, $\mu_{a,b}$ is a semi-circle distribution
and hence FSD. The case $b=0, a\ne0$ corresponds to the free Poisson
distributions, which are not FSD (see (vi) below). If $b>0$,
the free L\'evy measure for $\DD_c(\mu_{a,b})$ is given by
\begin{align*}
\nu(dx)=\frac{1}{2\pi b}
\frac{\sqrt{4bc^2-(x-ca)^2}}{x^2}1_{[ca-2c\sqrt{b},ca+2c\sqrt{b}]}(x)\,dx 
\end{align*}
for any positive number $c$.
Elementary calculus shows that the function
\[
k(x)=\frac{1}{2\pi b}
\frac{\sqrt{4bc^2-(x-ca)^2}}{|x|}1_{[ca-2c\sqrt{b},ca+2c\sqrt{b}]}(x)
\]
satisfies the monotonicity property described in \eqref{eqno2}, if
and only if $4b\geqq a^{2}$. Thus $\DD_c(\mu_{a,b})$ is FSD, if and
only if $4b\geqq a^2$. In case this inequality is strict, $\mu_{a,b}$ is
termed a pure free Meixner law in \cite{BB2006}, whereas the case
$4b=a^2$ is referred to as a free gamma distribution. 

\item P\'erez-Abreu and Sakuma \cite{PAS} introduced another type of free
  gamma distributions, namely the images of the classical gamma distributions
  under the Bercovici-Pata bijection. They have free L\'evy measure in
  the form: 
$$
\nu(dx) = \frac{ce^{-\alpha x}}{x} 1_{(0,\infty)}(x)\, dx, 
$$
where $\alpha$ and $c$ are positive parameters.
As the function $x\mapsto ce^{-\alpha x}$ is non-increasing on $(0,\infty)$,
these free gamma distributions are also FSD. Their main distributional
properties were uncovered by Haagerup and Thorbj{\o}rnsen in \cite{HT7}.

\item The Student t-distribution with 3 degrees of freedom is
the probability measure given by the Lebesgue density
\[
f(t)=\frac{2}{\pi\sqrt{3}}\Big(1+\frac{t^2}{3}\Big)^{-2},
\qquad(t\in\R).
\]
In the recent paper \cite{HaSa2016} it was found that this
distribution is FSD.

\item For $\lambda$ in $(0,\infty)$ and $\alpha$ in
  $\R\setminus\{0\}$ the free 
  Poisson distribution with parameters $(\lambda,\alpha)$ is the
  probability measure $\mu_{\lambda,\alpha}$ given by
\[
\mu_{\lambda,\alpha}(dt)=(1-\lambda)^+\delta_0(dt)
+\frac{1}{2\pi|\alpha|t}
\sqrt{4\lambda\alpha^2-(t-\alpha(1+\lambda))^2}
1_{[(1-\sqrt{\lambda})^2,(1+\sqrt{\lambda})^2]}(\alpha^{-1}t)\,dt
\]
(see e.g.\ \cite{NiSpBook}). This distribution is FID but not FSD,
since its free L{\'e}vy measure is $\nu(dt)=\lambda\delta_{\alpha}(dt)$.
Note that, in some contexts,
the free Poisson distributions are also referred to as free gamma
distributions (not to be mistaken with the two classes described above).  
\end{enumerate}
\end{examples}

The examples above illustrate the general fact that all FSD
distributions are unimodal (in full analogy with classical probability
theory). This was established in \cite{HaTh2015}.

Triggered by a question of P\'erez-Abreu, it was recently proved by
Belinschi et al.\ (see \cite{BBLS2011})
that the classical normal (or Gaussian) distributions
are FID. The proof is based on the characterization of $I(\boxplus)$ in 
Theorem~\ref{BV_char_of_FID}.
As a natural follow-up question Marek Bo{\.z}ejko asked
whether the normal distributions are FSD or not. 
In order to answer Bo{\.z}ejko's question (in the positive), 
this paper establishes a characterization of the free cumulant
transform of FSD distributions akin to Theorem~\ref{BV_char_of_FID}
(see Theorem~\ref{main1} below).
Based on some facts about the Voiculescu transform of the normal
distribution, established in \cite{BBLS2011}, and a fundamental
theorem due to Kerov (see Theorem~\ref{Kerovs_Thm}),
we can subsequently argue
that the normal distributions satisfy this characterization. 
More generally we prove, using the same method, that the Askey-Wimp-Kerov
distribution $\mu_c$ is FSD for any $c$ in $[-1,0]$. Let us recall
here (see e.g.\ \cite{Ke1998}) that for any $c$ in $(-1,\infty)$ the
Askey-Wimp-Kerov distribution $\mu_c$ is the measure on $\R$ with Lebesgue
density 
\[
\kappa_c(t)=\frac{1}{\sqrt{2\pi}\Gamma(c+1)}|D_{-c}(it)|^{-2},
\qquad(t\in\R),
\]
where $D_{-c}(z)$ is the solution to the differential equation:
\[
\frac{d^2y}{dz^2}+\Big(\frac{1}{2}-c-\frac{z^2}{4}\Big)y=0,
\]
satisfying the initial conditions:
\[
D_{-c}(0)=\frac{\Gamma(\frac{1}{2})2^{-c/2}}{\Gamma(\frac{1+c}{2})},
\quad\text{and}\quad
D_{-c}'(0)=\frac{\Gamma(-\frac{1}{2})2^{-(c+1)/2}}{\Gamma(\frac{c}{2})}.
\]
When $c>0$, the solution $D_{-c}$ has the integral representation 
\[
D_{-c}(z) = \frac{e^{-z^2/4}}{\Gamma(c )} \int_0^\infty e^{-z x}
x^{c-1}e^{-x^2/2} \,dx.
\]
It was proved in \cite{AsWi1984} that for any $c$ in $(-1,\infty)$ the
measure $\mu_c$ is a probability measure. The case $c=0$ corresponds
to the standard Gaussian distribution $N(0,1)$, and the family
$(\mu_c)_{c\in(-1,\infty)}$ can be extended 
continuously at $-1$ by defining $\mu_{-1}$ to be the Dirac
point mass $\delta_0$ at 0. Then for all $c$ in $[-1,\infty)$ 
the Cauchy-Stieltjes transform $G_{\mu_c}$ has the continued fraction
expansion:
\[
G_{\mu_c}(z)=\cfrac{1}{z-\cfrac{c+1}{z-\cfrac{c+2}{z-\cfrac{c+3}{z-\cdots}}}}, 
\]
or, equivalently, the orthogonal polynomials $(H_n(x;c))_{n\in\N_0}$ with respect
to $\mu_c$ are given by the recurrence relation:
\[
H_{n+1}(x;c)=xH_n(x;c)-(c+n)H_{n-1}(x;c), \qquad(n\geqq1),
\]
with $H_0(x,c)=1$ and $H_1(x;c)=x$. In the case $c=0$, one recovers
the Hermite polynomials (the orthogonal polynomials with respect to
$N(0,1)$), and for general $c$ the polynomials $H_n(x;c)$ are referred
to as associated Hermite polynomials (cf.\ \cite{AsWi1984}). Further
information is available in \cite{AsWi1984,BBLS2011,Ke1998}.

The remaining part of the paper is organized as follows:
In Section~\ref{sec:Proof_of_Main1} we establish the above mentioned
characterization of the free cumulant transforms of FSD
distributions. The proofs of some technical (but rather elementary) lemmas in
this section are deferred to an appendix in order to maintain a steady
flow of the paper. In Section~\ref{sec:Proof_of_Main2}, we prove the
free selfdecomposability of the Askey-Wimp-Kerov distribution $\mu_c$
for any $c$ in $[-1,0]$, and as an immediate corollary we conclude
that all normal distributions are freely selfdecomposable.

\section{A characterization of free selfdecomposability 
in terms of the free cumulant transform}
\label{sec:Proof_of_Main1} 

In this section we establish a characterization of free
selfdecomposability akin to the characterization of free infinite
divisibility in Theorem~\ref{BV_char_of_FID}.
To prove this result (Theorem~\ref{main1} below), we first need to
establish some lemmas. The first four lemmas below are rather elementary,
but for completeness we include proofs of Lemma~\ref{lem1b}, 
Lemma~\ref{lem4} and Lemma \ref{lem1} in the appendix. A proof of
Lemma~\ref{lem2} can be found in e.g.\ \cite[page~150]{Fe}. 

Throughout the paper $\log(z)$ denotes the usual (real-valued)
logarithm of $z$, whenever $z$ is a positive real number. When $z$ is
a complex number, the relevant branch of the logarithm will be
specified, if it is not clear from the context.

\begin{lem}\label{lem1b} Let $a,b$ be real numbers, such that $a<b$,
  and let $f\colon[a,b]\to\R$ be a continuous function. Consider
  further the standard argument function
  $\arg\colon\C\setminus\{iy\mid
  y\in(-\infty,0]\}\to(-\frac{\pi}{2},\frac{3\pi}{2})$. Then the
  following assertions hold:

\begin{enumerate}[\rm(i)]

\item As  $v\downarrow0$ we have that
\[
\tfrac{1}{2}\int_{a}^{b}f(x)\log((x-u)^{2}+v^{2})\,dx
\longrightarrow \int_{a}^{b}f(x)\log(|x-u|)\,dx
\qquad\text{uniformly w.r.t.\ $u\in[a,b]$.} 
\]

\item For any anti-derivative $F$ of $f$, we have that
\[
\int_{a}^{b}f(x)\arg (u+iv-x)\,dx\longrightarrow i\pi(F(b)-F(u))
\qquad\text{as $v\downarrow0$, uniformly w.r.t.\ $u\in[a,b]$.}
\]

\item As $u+iv\to0$ from $\C^+$ we have that
\[
\int_a^bf(x)\log((u-x)^2+v^2)\,dx\longrightarrow
\int_a^bf(x)\log(x^2)\,dx.
\]
\end{enumerate}
\end{lem}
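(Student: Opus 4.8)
The three claims are all of the form “a parameter-dependent integral converges, uniformly, to its natural limit,” so the plan is to reduce each to a routine uniform-continuity argument by isolating the singularity of the logarithm at the diagonal $x=u$ (resp.\ at the origin for (iii)).

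\medskip

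For (i), I would write $\log((x-u)^2+v^2)=\log(|x-u|)+\log\bigl(1+\tfrac{v^2}{(x-u)^2}\bigr)$ on the set where $x\ne u$, so that the difference of the two integrals is $\tfrac12\int_a^b f(x)\log\bigl(1+\tfrac{v^2}{(x-u)^2}\bigr)\,dx$ (note the integrand is $\log(v^2/(x-u)^2+1)\ge 0$, and equals $2\log v - 2\log|x-u|$ near the singularity, so both pieces are integrable). Since $f$ is bounded on $[a,b]$, say $|f|\le C$, it suffices to bound $\int_a^b \log\bigl(1+\tfrac{v^2}{(x-u)^2}\bigr)\,dx$ uniformly in $u$. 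Split the integral at $|x-u|\le\delta$ and $|x-u|>\delta$: on the outer region the integrand is at most $\log(1+v^2/\delta^2)\to0$; on the inner region one uses the explicit antiderivative of $\log(1+v^2/s^2)$ (namely $s\log(1+v^2/s^2)+2v\arctan(s/v)$) to see the contribution is $O(\delta\log(v/\delta)+v)$, which is $\le\varepsilon$ once $\delta$ is small and then $v$ smaller still. Choosing $\delta$ first (depending only on $\varepsilon$, not on $u$) and then $v$ small gives the uniform bound.

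\medskip

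For (ii), integrate by parts: with $F$ an antiderivative of $f$,
\[
\int_a^b f(x)\arg(u+iv-x)\,dx
=\Bigl[F(x)\arg(u+iv-x)\Bigr]_{x=a}^{x=b}
-\int_a^b F(x)\,\frac{d}{dx}\arg(u+iv-x)\,dx,
\]
and $\frac{d}{dx}\arg(u+iv-x)=\im\bigl(\tfrac{d}{dx}\log(u+iv-x)\bigr)=\im\bigl(\tfrac{-1}{u+iv-x}\bigr)=\tfrac{v}{(u-x)^2+v^2}$, which is (up to the factor $\pi$) an approximate identity concentrating at $x=u$ as $v\downarrow0$: indeed $\tfrac1\pi\int_{\R}\tfrac{v}{(u-x)^2+v^2}\,dx=1$. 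By continuity of $F$ on $[a,b]$ the last integral converges to $\pi F(u)$ uniformly in $u$ (a standard approximate-identity estimate, again splitting at $|x-u|\le\delta$, using uniform continuity of $F$ and the fact that the mass outside $[u-\delta,u+\delta]$ is $O(v/\delta)$). For the boundary term, as $v\downarrow0$ we have $\arg(u+iv-b)\to\pi$ when $u<b$ (and this is uniform once $u$ stays away from $b$; the endpoint $u=b$ needs a small separate remark, but $F(b)\arg(u+iv-b)$ stays bounded and $F$ is continuous there) and $\arg(u+iv-a)\to0$ when $u>a$. Collecting, the expression tends to $\pi F(b)-\pi F(u)$; the factor $i$ in the claimed limit $i\pi(F(b)-F(u))$ reflects that the displayed identity in the lemma should be read with the convention $\arg$ of a point in $\C^+$ lying in $(0,\pi)$, i.e.\ $\log(u+iv-x)=\log|u+iv-x|+i\arg(u+iv-x)$, so the $\arg$-integral is the imaginary part and picks up the $i$.

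\medskip

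For (iii), substitute $w=u+iv$ and observe $\log((u-x)^2+v^2)=2\log|w-x|$, so the integral is $2\re\int_a^b f(x)\log(w-x)\,dx$ with the branch fixed by the lemma's $\arg$; as $w\to0$ in $\C^+$ this is a special case of a dominated-convergence argument since $\log|w-x|\to\log|x|$ pointwise for $x\ne 0$ and $\bigl|\log|w-x|\bigr|\le \bigl|\log|x|\bigr|+C$ locally uniformly for $w$ near $0$ (the only danger is $x$ near $0$, where $|\log|w-x||\le |\log|x||$ once, say, $|w|\le|x|/2$, and for $|x|\le 2|w|$ one bounds $\int_{|x|\le 2|w|}|\log|w-x||\,dx=O(|w|\log(1/|w|))\to0$). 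Since $\int_a^b f(x)\log(x^2)\,dx=2\int_a^b f(x)\log|x|\,dx$, this gives exactly the asserted limit. (If $0\notin[a,b]$ the statement is trivial by uniform convergence; the content is the case $0\in[a,b]$.)

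\medskip

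The only real obstacle is the uniformity near the endpoints in (ii) and near the singular point in (i) and (ii): one must choose the splitting radius $\delta$ depending on $\varepsilon$ alone, before choosing $v$, which works because the bad contributions are controlled by the modulus of continuity of $F$ (resp.\ boundedness of $f$) — quantities independent of the parameter $u$. Everything else is bookkeeping with elementary antiderivatives, which is why the authors relegate the full proof to the appendix.
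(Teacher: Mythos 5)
Parts (i) and (iii) are correct and essentially coincide with the paper's own arguments. In (i) both you and the paper reduce the difference of the two integrals to the $u$-independent quantity $\|f\|_\infty\int_0^{b-a}\log\bigl(1+\tfrac{v^2}{s^2}\bigr)\,ds\to0$ (the paper via the substitution $x\mapsto x+u$ and dominated convergence, you via the explicit antiderivative --- same content). In (iii), where no uniformity is claimed, your dominated-convergence argument with the domination $\bigl|\log|w-x|\bigr|\leqq\bigl|\log|x|\bigr|+C$ for $|w|\leqq|x|/2$, together with the separate estimate $\int_{|x|\leqq2|w|}\bigl|\log|w-x|\bigr|\,dx=O(|w|\log(1/|w|))$, is a legitimate alternative to the paper's four-term splitting. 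You are also right that the factor $i$ in the displayed limit of (ii) is spurious: the left-hand side is real, the paper's appendix proves convergence to $\pi(F(b)-F(u))$, and the $i$ only appears when the $\arg$-integral is reassembled into $\int f(x)\log(u+iv-x)\,dx$ in the proof of Lemma~\ref{lem1}.

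Part (ii) is where you take a genuinely different route (integration by parts plus the Poisson kernel, versus the paper's direct decomposition of $\arg$), and it is also where your write-up has a gap. You assert that $\int_a^bF(x)\tfrac{v}{(u-x)^2+v^2}\,dx\to\pi F(u)$ uniformly in $u\in[a,b]$. This is false near the endpoints: at $u=b$ only half the mass of the kernel lies in $[a,b]$, so the integral tends to $\tfrac{\pi}{2}F(b)$, and the convergence to $\pi F(u)$ cannot be uniform on any neighbourhood of $b$ (or of $a$). The boundary term $F(b)\arg(u+iv-b)$ likewise tends to $\pi F(b)$ only for $u<b$ and to $\tfrac{\pi}{2}F(b)$ at $u=b$. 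These two failures cancel, but since uniformity on all of $[a,b]$ is precisely what the lemma asserts (and what is used downstream), the cancellation must be carried out explicitly rather than deferred to ``a small separate remark.'' It can be: writing $\arg(s+iv)=\tfrac{\pi}{2}-\arctan(s/v)$ and $\int_a^b\tfrac{v}{(u-x)^2+v^2}\,dx=\arctan\bigl(\tfrac{b-u}{v}\bigr)+\arctan\bigl(\tfrac{u-a}{v}\bigr)$, your expression equals $(F(b)-F(u))\arctan\bigl(\tfrac{b-u}{v}\bigr)+(F(a)-F(u))\arctan\bigl(\tfrac{u-a}{v}\bigr)+\tfrac{\pi}{2}(F(b)-F(a))$ minus the genuinely uniform error $\int_a^b(F(x)-F(u))\tfrac{v}{(u-x)^2+v^2}\,dx$, and the estimate $|F(b)-F(u)|\cdot\bigl|\arctan\bigl(\tfrac{b-u}{v}\bigr)-\tfrac{\pi}{2}\bigr|\leqq\|f\|_\infty(b-u)\cdot\tfrac{v}{b-u}=\|f\|_\infty v$ (and its analogue at $a$) gives uniformity. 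The paper avoids all of this by splitting $\arg((u-x)+iv)$ into $\arctan\bigl(\tfrac{v}{u-x}\bigr)$ for $x<u$ and $\pi-\arctan\bigl(\tfrac{v}{x-u}\bigr)$ for $x>u$, which yields the single $u$-independent bound $2\|f\|_\infty\int_0^{b-a}\bigl|\arctan\bigl(\tfrac{v}{y}\bigr)\bigr|\,dy\to0$ and sidesteps the endpoint bookkeeping entirely.
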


\begin{lem}\label{lem2}
Let $\rho$ be a finite Borel measure on $\R$, and let $a,b$ be real
numbers such that $a<b$, and such that $\rho(\{ a\})=\rho(\{b\})=0$. 
Let further $l(x)=\rho((x,\infty))$ for any $x$ in $\R$. Then  
for any $f$ in $C^{1}([a,b])$ we have that
\begin{align*}
\int_{a}^{b}f(x)\,\rho(dx)=-\big[f(x)l(x)\big]_{a}^{b}
+\int_{a}^{b}f'(x)l(x)\,dx. 
\end{align*}
\end{lem}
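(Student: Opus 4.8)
The plan is to establish this as a simple integration-by-parts formula for Stieltjes integrals, using the right tail function $l(x)=\rho((x,\infty))$ as a ``primitive'' of $-\rho$. First I would observe that, since $\rho$ is a finite Borel measure, $l$ is a non-increasing, right-continuous function on $[a,b]$ with $l(a)=\rho((a,\infty))$ and $l(b)=\rho((b,\infty))$, and that $-l$ induces the measure $\rho$ on $(a,b]$ in the Lebesgue--Stieltjes sense: for $a\leqq x<y\leqq b$ one has $l(x)-l(y)=\rho((x,y])$. The hypothesis $\rho(\{a\})=\rho(\{b\})=0$ guarantees that $l$ is continuous at the endpoints, so that no boundary atoms interfere with the endpoint terms.

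The key step is to apply the classical integration-by-parts theorem for Riemann--Stieltjes (equivalently Lebesgue--Stieltjes) integrals: if $f\in C^1([a,b])$ and $g$ is of bounded variation on $[a,b]$, then
\begin{equation*}
\int_{a}^{b} f(x)\,dg(x) = \big[f(x)g(x)\big]_{a}^{b} - \int_{a}^{b} f'(x) g(x)\,dx,
\end{equation*}
the last integral being an ordinary Riemann integral since $f'$ is continuous. I would apply this with $g(x)=-l(x)$. Because $\rho$ has no atom at $a$ or $b$, one has $\int_{(a,b]} f\,d\rho = \int_{[a,b]} f\,d\rho = \int_a^b f(x)\,dg(x)$ with $g=-l$, and rearranging gives
\begin{equation*}
\int_{a}^{b} f(x)\,\rho(dx) = -\big[f(x)l(x)\big]_{a}^{b} + \int_{a}^{b} f'(x) l(x)\,dx,
\end{equation*}
which is exactly the assertion. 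Alternatively, for a self-contained argument one can use Fubini's theorem: write $f(x) = f(a) + \int_a^x f'(t)\,dt$, substitute into $\int_{(a,b]} f\,d\rho$, and interchange the order of integration over the region $\{(t,x): a\leqq t\leqq x\leqq b\}$; the $f(a)$ term contributes $f(a)\rho((a,b])$ and the double integral contributes $\int_a^b f'(t)\,\rho([t,b])\,dt = \int_a^b f'(t)(l(t)-l(b)+\rho(\{b\}))\,dt$. Collecting terms and using $\rho(\{b\})=0$ recovers the formula after simplifying $f(a)\rho((a,b]) - l(b)\int_a^b f'(t)\,dt = f(a)(l(a)-l(b)) - l(b)(f(b)-f(a))$.

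There is essentially no serious obstacle here; the only points requiring mild care are bookkeeping with half-open versus closed intervals (handled by the no-atom hypothesis at the endpoints) and citing the appropriate version of Stieltjes integration by parts for a monotone, possibly discontinuous, integrator $l$. Since the paper explicitly points to \cite[page~150]{Fe} for this lemma, I would simply invoke that reference for the Stieltjes integration-by-parts identity and verify that $g=-l$ meets its hypotheses, noting that the continuity of $l$ at $a$ and $b$ makes the boundary substitution unambiguous.
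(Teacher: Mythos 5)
Your proposal is correct and takes essentially the same route as the paper, which offers no proof of its own but simply refers to the Stieltjes integration-by-parts formula in Feller (p.~150), exactly as your primary plan does; the endpoint bookkeeping via $\rho(\{a\})=\rho(\{b\})=0$ is handled as one would expect. Your supplementary Fubini argument is also sound, up to a harmless slip: the term you write as $\rho(\{b\})$ in $\rho([t,b])=l(t)-l(b)+\rho(\{b\})$ should be $\rho(\{t\})$, which vanishes for Lebesgue-almost every $t$ and so does not affect the conclusion.
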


\begin{lem}\label{lem4}
Let $\rho$ be a Borel measure on $\R$ such that
$\int_{\R}\log(2+|x|)\,\rho(dx)<\infty$. Consider further the
function $k\colon\R\setminus\{0\}\to[0,\infty)$ given by
\[
k(x)=
\begin{cases}
\int_x^\infty\frac{1+y^2}{y^2}\,\rho(dy), &\text{if $x>0$,}
\\
\int_{-\infty}^x\frac{1+y^2}{y^2}\,\rho(dy), &\text{if $x<0$.}
\end{cases}
\]
Then $k$ is increasing on $(-\infty,0)$, decreasing on $(0,\infty)$,
and the following assertions hold:

\begin{enumerate}[\rm(i)]

\item\label{lem4-1} The measure
  $\frac{k(x)}{|x|}1_{\R\setminus\{0\}}(x)\,dx$ is a L\'evy measure.

\item $x^2k(x)\to0$ as $x\to0$.

\item $k(x)\log(|x|)\to0$ as $|x|\to\infty$.

\item\label{lem4-2} For any $z$ in $\C^-$ we have that
\[
\lim_{|x|\to\infty}\Big(\log(1-xz)+\frac{xz}{1+x^2}\Big)k(x)
=0=\lim_{x\to0}\Big(\log(1-xz)+\frac{xz}{1+x^2}\Big)k(x),
\]
where $\log$ is the standard branch of the logarithm on
$\C\setminus(-\infty,0]$.

\end{enumerate}
\end{lem}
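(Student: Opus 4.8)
\textbf{Proof plan for Lemma~\ref{lem4}.}

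The plan is to set up the function $k$ explicitly and verify its properties one at a time, exploiting the hypothesis $\int_{\R}\log(2+|x|)\,\rho(dx)<\infty$ at each step where a tail-decay estimate is needed. First I would note that monotonicity of $k$ is immediate from its definition: on $(0,\infty)$ the integrand $\frac{1+y^2}{y^2}$ is non-negative and $k(x)=\int_x^\infty\frac{1+y^2}{y^2}\,\rho(dy)$ is a tail integral, hence non-increasing in $x$; the symmetric argument on $(-\infty,0)$ gives that $k$ is non-decreasing there. For finiteness of $k(x)$ at each fixed $x\neq0$, observe that for $|y|$ bounded away from $0$ one has $\frac{1+y^2}{y^2}\leqslant C(1+\log(2+|y|))$ on the relevant range—actually even $\frac{1+y^2}{y^2}\leqslant 1+x^{-2}$ pointwise for $|y|\geqslant|x|$—so the integral converges because $\rho$ is finite on that region; the log-moment hypothesis is what genuinely controls the large-$|y|$ behaviour via $\frac{1+y^2}{y^2}\to1$ as $|y|\to\infty$ together with $\int\log(2+|x|)\,\rho(dx)<\infty$ implying $\rho$ has finite total mass away from $0$.

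Next I would handle the three limit assertions, which are the technical heart of the lemma. For (ii), $x^2k(x)\to0$ as $x\to0$: for $x>0$ write $x^2k(x)=\int_x^\infty\frac{x^2(1+y^2)}{y^2}\,\rho(dy)\leqslant\int_x^\infty\frac{x^2}{y^2}\,\rho(dy)+\int_x^\infty x^2\,\rho(dy)$; the second term is $\leqslant x^2\rho((0,\infty))\to0$, and for the first term I would split at a fixed $\delta>0$: on $(\delta,\infty)$ the integrand is $\leqslant x^2/\delta^2\to0$, while on $(x,\delta]$ we have $\frac{x^2}{y^2}\leqslant1$ so that integral is $\leqslant\rho((0,\delta])$, which is small by choosing $\delta$ small (noting $\rho(\{0\})$ need not vanish a priori—but if $\rho(\{0\})>0$ then $k$ would be infinite, so implicitly $\rho((0,\delta])\to0$; actually one needs $\rho$ to put no mass near $0$ in a summable-against-$\frac{1+y^2}{y^2}$ sense, and finiteness of $k$ already forces $\int_{(0,1)}y^{-2}\,\rho(dy)<\infty$, whence $\rho((0,\delta])\to0$). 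For (iii), $k(x)\log|x|\to0$ as $|x|\to\infty$: for $x>0$ large, $k(x)=\int_x^\infty\frac{1+y^2}{y^2}\,\rho(dy)\leqslant 2\rho((x,\infty))$ for $x\geqslant1$, so $k(x)\log x\leqslant 2\log x\,\rho((x,\infty))\leqslant 2\int_x^\infty\log y\,\rho(dy)\to0$ since $\log y\leqslant\log(2+y)$ and the log-moment is finite (tail of a convergent integral). Assertion (i), that $\frac{k(x)}{|x|}1_{\R\setminus\{0\}}(x)\,dx$ is a Lévy measure, then requires $\int_{\R}\min\{1,x^2\}\frac{k(x)}{|x|}\,dx<\infty$: near $0$ use (ii) to bound $\frac{x^2k(x)}{|x|}=|x|\cdot\frac{x^2k(x)}{x^2}$... more carefully, $\int_0^1 x\cdot\frac{k(x)}{x}\,dx$—wait, $\min\{1,x^2\}=x^2$ for $|x|<1$, so the integrand is $\frac{x^2k(x)}{|x|}=|x|k(x)$, and since $x^2k(x)\to0$ we get $|x|k(x)=o(1/|x|)$ which is not obviously integrable; instead I would use Fubini: $\int_0^1|x|k(x)\,dx=\int_0^1\!\!\int_x^\infty\frac{x(1+y^2)}{y^2}\,\rho(dy)\,dx=\int_{(0,\infty)}\frac{1+y^2}{y^2}\Big(\int_0^{\min\{y,1\}}x\,dx\Big)\rho(dy)$, and the inner integral is $\frac{1}{2}\min\{y,1\}^2$, giving $\leqslant\frac{1}{2}\int_{(0,\infty)}(1+y^2)/y^2\cdot\min\{y^2,1\}\,\rho(dy)\leqslant\frac12\int(1+\min\{1,y^{-2}\})... $ bounded using finiteness of $\rho$ plus the fact that $\int_{(0,1)}y^{-2}\cdot y^2\,\rho(dy)=\rho((0,1))<\infty$; the far tail $\int_1^\infty\frac{k(x)}{x}\,dx$ is similarly $\int_{(1,\infty)}\frac{1+y^2}{y^2}\log y\,\rho(dy)\lesssim\int\log(2+y)\,\rho(dy)<\infty$ by Fubini. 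This is the step I expect to be the most delicate bookkeeping, since it is where the precise form of the log-moment assumption is exactly matched to integrability of the candidate Lévy measure.

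Finally, for (iv) I would fix $z\in\C^-$ and analyze $g_x(z):=\big(\log(1-xz)+\frac{xz}{1+x^2}\big)k(x)$, where $\log$ is the principal branch (well-defined since $1-xz\notin(-\infty,0]$ for real $x$ and $z\in\C^-$, as $\im(1-xz)=-x\,\im z$ has the sign of $x$ and vanishes only at $x=0$ where $1-xz=1$). As $|x|\to\infty$, $|\log(1-xz)|=O(\log|x|)$ and $|\frac{xz}{1+x^2}|=O(1/|x|)$, so $|g_x(z)|\leqslant C(z)\log(2+|x|)\,k(x)\to0$ by (iii) (the $\log(2+|x|)$ versus $\log|x|$ discrepancy is harmless). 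As $x\to0$, $\log(1-xz)=O(|x|)$ and $\frac{xz}{1+x^2}=O(|x|)$, so $|g_x(z)|\leqslant C(z)|x|k(x)$, and since $x^2k(x)\to0$ we have $|x|k(x)=\frac{x^2k(x)}{|x|}$ which still could blow up—so here I would argue more sharply that $|x|k(x)\to0$: from the Fubini computation above, $\int_0^1|x|k(x)\,dx<\infty$ together with $|x|k(x)$ being monotone-ish... actually $|x|k(x)$ need not be monotone, but $k$ is monotone so on $(0,1)$, for $0<x<x'$, $xk(x)\geqslant$... hmm, I would instead note directly: $|x|k(x)=|x|\int_{|x|}^\infty\frac{1+y^2}{y^2}\,\rho(dy)\leqslant|x|\int_{|x|}^1\frac{2}{y^2}\,\rho(dy)+|x|\cdot2\rho((1,\infty))$ for $|x|<1$; the second term $\to0$, and for the first, $|x|\int_{|x|}^1 y^{-2}\,\rho(dy)\leqslant\int_{|x|}^1 y^{-1}\,\rho(dy)\cdot\mathbf{1}$—using $|x|/y^2\leqslant1/y$ for $y\geqslant|x|$—and $\int_{(|x|,1)}y^{-1}\,\rho(dy)\to0$ as $x\to0$ since $\int_{(0,1)}y^{-2}\,\rho(dy)<\infty$ implies $\int_{(0,1)}y^{-1}\,\rho(dy)<\infty$, so its tail near $0$ vanishes. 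Hence $|x|k(x)\to0$ and thus $g_x(z)\to0$ as $x\to0$, completing the proof. Throughout, the only real subtlety is keeping track of why $\rho$ cannot charge $0$ too heavily—this follows because finiteness of $k$ itself forces $\int_{(0,1)}y^{-2}\,\rho(dy)<\infty$ and symmetrically on the negative side—and matching the growth rate $\log(2+|x|)$ in the hypothesis to the $\log|x|$ appearing in the principal logarithm of $1-xz$.
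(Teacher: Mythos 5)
Parts (i)--(iii) and the $|x|\to\infty$ half of (iv) are correct and follow essentially the paper's own route (Tonelli for (i), the $\varepsilon$-$\delta$ splitting for (ii), the domination $\log(x)\leqq\log(y)$ inside the tail integral for (iii), and the bound $|\log(1-xz)+\frac{xz}{1+x^2}|\leqq C(z)\log(2+|x|)+O(1)$ combined with (iii) for the first limit in (iv)). The side worry about $\rho$ charging $0$ is a red herring --- $\rho((0,\delta])\to0$ as $\delta\downarrow0$ simply because $\rho$ is finite --- but it does not damage those arguments.

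The $x\to0$ half of (iv), however, has a genuine gap. Your bound $|g_x(z)|\leqq C(z)\,|x|\,k(x)$ throws away the essential first-order cancellation: since $\log(1-xz)=-xz-\tfrac{1}{2}x^2z^2+o(x^2)$ and $\tfrac{xz}{1+x^2}=xz+O(x^3)$, the sum $\log(1-xz)+\tfrac{xz}{1+x^2}$ is actually $O(x^2)$, not merely $O(|x|)$, and then $O(x^2)k(x)\to0$ follows directly from (ii); this is the paper's argument. Your attempted repair --- proving $|x|k(x)\to0$ --- cannot work, because that statement is false under the lemma's hypotheses: for $\rho=\sum_{n\geqq1}2^{-n}\delta_{2^{-n}}$ (a finite measure with finite log-moment) one has $k(2^{-N})\approx 2^{N+1}$, so $xk(x)$ stays of order $2$ while $x^2k(x)\to0$. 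The specific steps you give also fail for this $\rho$: finiteness of $k(x)$ for each fixed $x>0$ does \emph{not} force $\int_{(0,1)}y^{-2}\,\rho(dy)<\infty$ (here that integral is $\sum 2^{n}=\infty$), and $\int_{(|x|,1)}y^{-1}\,\rho(dy)$ \emph{increases} to $\int_{(0,1)}y^{-1}\,\rho(dy)$ as $x\to0$ rather than tending to $0$ (you appear to have swapped the tail $(0,|x|)$ with its complement). Replacing this step with the second-order Taylor expansion closes the gap and completes the proof.
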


\begin{lem}\label{lem1}
Let $a,b$ be real numbers, such that $a<b$, and let $m$ be a positive
integer. Suppose further that $f\colon(a,b)\to\R$ belongs to
$L^1((a,b), dx) \cap C^{m}((a,b)) $. 
Consider also the Cauchy transform of $f$:
\[
G_{f}(z)=\int_{a}^{b}\frac{f(x)}{z-x}\,dx, \qquad(z\in\C^+).
\]
Then $G_f$ and all of its derivatives up to order $m-1$ can be
extended to continuous functions on $\C^{+}\cup(a,b)$.
\end{lem}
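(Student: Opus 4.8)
The plan is to reduce to a local problem: it suffices to show that for each $k\in\{0,1,\dots,m-1\}$ and each $x_0\in(a,b)$, the $k$-th complex derivative $G_f^{(k)}$ (which exists and is holomorphic on $\C^+$) extends continuously to a neighbourhood of $x_0$ inside $\C^+\cup(a,b)$; since $\C^+$ is dense in $\C^+\cup(a,b)$, such local extensions automatically agree on overlaps and glue to a continuous extension on all of $\C^+\cup(a,b)$. Fix $x_0\in(a,b)$. First I would cut off: choose $\chi\in C^\infty_c((a,b))$ identically equal to $1$ on some interval $(x_0-\delta,x_0+\delta)$, and write $f=g+r$ with $g=\chi f$ and $r=(1-\chi)f$. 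Then $g$, extended by $0$, lies in $C^m_c(\R)$, while $r\in L^1((a,b),dx)$ vanishes on $(x_0-\delta,x_0+\delta)$. For $z$ in the disc of radius $\delta$ about $x_0$ we have $|z-x|\ge\delta$ whenever $r(x)\ne0$, so $r(x)/(z-x)$ and all of its $z$-derivatives are dominated, uniformly in $z$, by an integrable function of $x$; hence $G_r$ is holomorphic on that disc, and in particular $G_r$ and all its derivatives extend analytically across $x_0$. This reduces everything to the behaviour of $G_g$ for $g\in C^m_c(\R)$.

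Next, for such $g$, for $z\in\C^+$ and $0\le k\le m-1$, I would integrate by parts $k$ times. Using $\partial_x^k\big((z-x)^{-1}\big)=k!\,(z-x)^{-(k+1)}$ and the compact support of $g$ (all boundary terms vanish), this yields
\[
G_g^{(k)}(z)=(-1)^k k!\int_\R\frac{g(x)}{(z-x)^{k+1}}\,dx=\int_\R\frac{g^{(k)}(x)}{z-x}\,dx=G_{g^{(k)}}(z),
\]
with $g^{(k)}\in C^{m-k}_c(\R)\subseteq C^1_c(\R)$ because $m-k\ge1$. So it remains only to prove: \emph{if $h\in C^1_c(\R)$ then $G_h$ extends to a continuous function on $\C^+\cup\R$.} For this I would integrate by parts once more. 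With $\log$ the branch on $\C^+$ whose argument lies in $(0,\pi)$ one has $\partial_x\log(z-x)=-(z-x)^{-1}$, and since $h$ has compact support,
\[
G_h(z)=\int_\R h'(x)\log(z-x)\,dx=\tfrac12\int_\R h'(x)\log\big((x-u)^2+v^2\big)\,dx+i\int_\R h'(x)\arg(u+iv-x)\,dx
\]
for $z=u+iv\in\C^+$, $\arg$ being the branch from Lemma~\ref{lem1b}. Picking an interval $[c,d]$ containing $\mathrm{supp}(h)$ together with a neighbourhood of the point $x_0$ in question, and applying parts (i) and (ii) of Lemma~\ref{lem1b} to the continuous function $h'|_{[c,d]}$ (with antiderivative $h$), I obtain that as $v\downarrow0$ the right-hand side converges, uniformly in $u\in[c,d]$, to a function of $u$ that is continuous on $[c,d]$ — namely $\int_c^d h'(x)\log|x-u|\,dx$ up to adding a constant multiple of $h(u)$. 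Uniformity in $u$ together with continuity of the limit forces $G_h(u+iv)$ to have a limit as $u+iv\to u_0$ from $\C^+$ for every $u_0\in[c,d]$, and it makes the resulting boundary function continuous; patching it with the holomorphic function $G_h$ on $\C^+$ gives the desired continuous extension. Together with the first paragraph this proves the lemma.

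The main obstacle will be the boundary step just described: once $z$ reaches $\R$, the integrand $h'(x)\log(z-x)$ has an integrable logarithmic singularity at $x=\re z$, and one must know the integral still converges, uniformly in $\re z$ and with continuous limit. This is exactly what Lemma~\ref{lem1b}(i)-(ii) supply, so modulo that (elementary) lemma the remaining work is routine bookkeeping. The sole purpose of the cut-off $\chi$ is to confine all the analysis to the interior of $(a,b)$, where $f$ is genuinely $C^m$: near the endpoints $a,b$, where $f$ is merely integrable, no regularity is available, but none is needed, since those points lie outside $\C^+\cup(a,b)$.
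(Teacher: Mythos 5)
Your proposal is correct and takes essentially the same route as the paper: localize away from the endpoints of $(a,b)$, transfer the derivatives onto $f$ by repeated integration by parts, integrate by parts once more to reach the logarithmic kernel, and apply Lemma~\ref{lem1b}(i)--(ii); the only cosmetic difference is that your smooth cut-off removes the boundary terms $f(a')\log(z-a')$ and $f(b')\log(z-b')$ which the paper's hard truncation of the interval produces and carries along as functions analytic across $(a',b')$. (One trivial fix: for $z$ in the disc of radius $\delta$ about $x_0$ and $|x-x_0|\geqq\delta$ you only get $|z-x|>0$, not $|z-x|\geqq\delta$, so shrink that disc to radius $\delta/2$.)
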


\begin{lem}\label{lem3}
Suppose that $k$ is a function in $C^{\infty}(\R\setminus\{0\})$ with
bounded support and such that $k$ and all its derivatives are bounded
functions on $\R\setminus\{0\}$. Suppose in addition that $k$ is
increasing on $(-\infty,0)$ 
and decreasing on $(0,\infty)$, and let $\mu$ be the measure in
$L(\boxplus)$ with free characteristic triplet
$(0,\int_{-1}^1\sign(t)k(t)\,dt,\frac{k(t)}{|t|}\,dt)$.  

Then the free cumulant transform $\calC_\mu$ extends to an analytic
function $\calC_\mu\colon\C^-\to\C$, such that
$\im(\calC_{\mu}'(z))\leqq0$ for any $z$ in $\C^-$.
\end{lem}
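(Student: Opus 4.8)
\textbf{Proof plan for Lemma~\ref{lem3}.}
The plan is to compute $\calC_\mu$ explicitly from the free L\'evy--Khintchine representation \eqref{eqno1} for the given triplet, and then to examine its boundary and analytic-continuation behavior directly. By \eqref{eqno1} with $a=0$ and $\eta=\int_{-1}^1\sign(t)k(t)\,dt$, for $w$ with $1/w\in\Gamma_{\lambda,M}$ we have
\[
\calC_\mu(w)=\eta w+\int_{\R\setminus\{0\}}\Big(\frac{1}{1-wx}-1-wx1_{[-1,1]}(x)\Big)\frac{k(x)}{|x|}\,dx .
\]
I would substitute the standard identity $\frac{1}{1-wx}-1=\frac{wx}{1-wx}$ and, more importantly, rewrite everything in terms of $z=1/w\in\C^+$ and the Voiculescu transform $\varphi_\mu(z)=\calC_\mu(1/z)/(1/z)=z\calC_\mu(1/z)$, so that $\varphi_\mu(z)=\gamma+\int(1+tz)/(z-t)\,\sigma(dt)$ by \eqref{eqno1a}, where $\sigma(dt)=\frac{t^2}{1+t^2}\cdot\frac{k(t)}{|t|}\,dt$ is a finite measure supported on the (bounded) support of $k$. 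The point of passing to $\varphi_\mu$ is that the kernel $\frac{1+tz}{z-t}=-t+\frac{1+t^2}{z-t}$ is, up to the affine term $\int(-t)\,\sigma(dt)$, a Cauchy transform of the absolutely continuous finite measure $(1+t^2)\,\sigma(dt)=t^2\,\frac{k(t)}{|t|}\,dt=|t|k(t)\,dt$; since $k$ is smooth away from $0$, Lemma~\ref{lem1} (applied on intervals bounded away from $0$) and the control near $0$ from the boundedness hypotheses on $k$ let this Cauchy transform extend continuously from $\C^+$ across $\R\setminus\{0\}$. But I actually want continuation into $\C^-$, not merely continuity up to the boundary, so the substantive step is different, as follows.

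The key step is to continue $G(z):=\int_{\R}\frac{|t|k(t)}{z-t}\,dt$ analytically from $\C^+$ to $\C^-$ and to control the sign of the imaginary part of the resulting function. Here the hypothesis that $k$ (hence $g(t):=|t|k(t)$) is $C^\infty$ with bounded support is decisive: writing $G(z)$ via the Plemelj-type decomposition, for $z$ in a neighborhood of a point $x_0\in\R\setminus\{0\}$ one has $G(z)=g(x_0)\log\frac{\,\cdot\,-x_0}{\,\cdot\,}\big|\ldots$ — more cleanly, integrate by parts repeatedly (Lemma~\ref{lem2} in the iterated form, or just integration by parts since $g\in C^\infty_c(\R\setminus\{0\})$) to write, for $z\in\C^+$,
\[
G(z)=\int g(t)\partial_t\log(z-t)\,dt=-\int g'(t)\log(z-t)\,dt
\]
with the standard branch of $\log$ on $\C\setminus(-\infty,0]$ (valid after a rotation/choice of branch since the support of $g'$ is compact and avoids $0$; one splits the positive and negative parts of the support and chooses branches accordingly). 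The function $z\mapsto\int g'(t)\log(z-t)\,dt$ is analytic on $\C\setminus\operatorname{supp}(g')$ and extends analytically across each open interval of $\R\setminus\operatorname{supp}(g')$; across $\operatorname{supp}(g')$ the two analytic continuations from above and below differ by $2\pi i\int_{t>x}g'(t)\,dt=-2\pi i\,g(x)\cdot 1_{x>0}+\ldots$, i.e.\ by $\mp2\pi i\,g(x)$ on $(0,\infty)$ and $(-\infty,0)$ respectively. Thus the continuation $G^{\downarrow}$ of $G$ from $\C^+$ into $\C^-$ exists as a single-valued analytic function on $\C^-$ (going once around is harmless after the single crossing), and for real $z=x$ it satisfies $\im G^{\downarrow}(x)=\im G(x)+2\pi g(x)\ge 0$ when... — here I must track signs carefully: for $z\in\C^+$, $\im G(z)=-\int\frac{v\,g(t)}{|z-t|^2}\,dt\le0$ since $g\ge0$; crossing to $\C^-$ adds $+2\pi g(x)\ge0$ on the real axis, and for genuine $z\in\C^-$ one gets $\im G^{\downarrow}(z)=+\int\frac{|v|g(t)}{|z-t|^2}\,dt\ge0$. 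Feeding this back through $\varphi_\mu(z)=-\int t\,\sigma(dt)+G^{\downarrow}(z)$ and then $\calC_\mu(w)=w\varphi_\mu(1/w)$, note that $w=1/z$ maps $\C^-$ to $\C^-$, and I then verify the claim about $\calC_\mu'$.

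The final computation is to differentiate and get the sign. From $\calC_\mu(w)=w\varphi_\mu(1/w)$ we get $\calC_\mu'(w)=\varphi_\mu(1/w)-\frac1w\varphi_\mu'(1/w)$; substituting $z=1/w$ (so $dz=-z^2\,dw$, and $w\in\C^-\iff z\in\C^-$) it is cleaner to differentiate $\calC_\mu(1/z)$, or simply to observe that the stated inequality $\im\calC_\mu'\le0$ on $\C^-$ is exactly the infinitesimal form of the FSD characterization \eqref{eqno2}: indeed, directly from the integral formula, $\calC_\mu'(w)=\eta+\int\big(\frac{x}{(1-wx)^2}-x1_{[-1,1]}(x)\big)\frac{k(x)}{|x|}\,dx$, which after the substitution $1-wx=1-x/z$ and an integration by parts in $x$ (using Lemma~\ref{lem2} with $l(x)=\pm k(x)$, legitimate because $k$ is the tail of the L\'evy measure by Lemma~\ref{lem4} read backwards) becomes $\eta+\big(\text{affine}\big)+\int\frac{dk\text{-related kernel}}{(z-x)}$, an expression whose imaginary part on $\C^-$ one reads off to be $\le0$ because $k$ is non-increasing on $(0,\infty)$ and non-decreasing on $(-\infty,0)$ (so $dk\ge0$ on $(-\infty,0)$ and $-dk\ge0$ on $(0,\infty)$, giving a Cauchy transform of a positive measure, whose imaginary part is $\ge0$ on $\C^+$ and $\le0$ on $\C^-$). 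The main obstacle is bookkeeping: making the analytic continuation across $\R\setminus\{0\}$ rigorous (branch choices for $\log$ on the two halves of the support, single-valuedness after crossing, behavior as $|z|\to\infty$ and as $z\to0$ where the support of $k$ may accumulate — but it does not, since $k$ has bounded support bounded away from $0$ is \emph{not} assumed, only bounded support, so one genuinely needs the $x\to0$ estimates $x^2k(x)\to0$ etc.\ from Lemma~\ref{lem4}), and then tracking the sign of the imaginary part consistently through the two substitutions $z=1/w$ and the integrations by parts. Everything else is routine once the continuation is in place.
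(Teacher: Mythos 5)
There is a genuine conceptual error at the heart of your plan: you assert that ``$w=1/z$ maps $\C^-$ to $\C^-$.'' In fact $w\mapsto 1/w$ exchanges $\C^-$ and $\C^+$, so for $w\in\C^-$ the relevant point $z=1/w$ lies in $\C^+$. Consequently no analytic continuation of $\varphi_\mu$ or of $G(z)=\int\frac{|t|k(t)}{z-t}\,dt$ across $\R\setminus\{0\}$ into $\C^-$ is needed, and the Plemelj-type continuation you propose would in any case produce the \emph{wrong} function: the jump formula says precisely that the continuation of the $\C^+$ branch across the support differs from the integral formula evaluated in $\C^-$ by $-2\pi i\,g$, whereas the object the lemma is about is $\calC_\mu$ on $\C^-$, which is already given directly by the L\'evy--Khintchine integral \eqref{eqno1} (valid on all of $\C^-$ since $\mu\in I(\boxplus)$); in the paper's notation $\calC_\mu(1/z)=G_{\tilde k}(z)$ with $\tilde k(t)=\sign(t)k(t)$ and $z\in\C^+$ throughout. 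The same inversion error infects your final sign computation: a Cauchy transform of a positive measure has nonpositive imaginary part on $\C^+$ and nonnegative imaginary part on $\C^-$ (you state the opposite), and you then evaluate it at a point you believe lies in $\C^-$. The two sign errors happen to compensate, but that is an accident, not an argument. You also leave unverified a point on which the whole computation hinges: after integrating $\calC_\mu'(w)=\int\frac{\tilde k(t)}{(1-wt)^2}\,dt$ by parts, the term linear in $z=1/w$ coming from the boundary values $k(0\pm)$ must cancel exactly against $z\int(-\tilde k'(t))\,dt$, leaving only a real constant plus $\int\frac{t^2(-\tilde k'(t))}{z-t}\,dt$; without that cancellation the sign of the imaginary part is not determined.

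For comparison, the paper's proof stays entirely in $z\in\C^+$: it uses Lemma~\ref{lem1} to extend $G_{\tilde k}$ and its derivatives continuously to $\C^+\cup(\R\setminus\{0\})$, identifies the boundary values by Stieltjes inversion as $\im\bigl(\calC_\mu'(1/x)\bigr)=\pi x^2\tilde k'(x)\leqq0$, proves $\calC_\mu'(1/z)\to0$ as $z\to0$ by integrating by parts twice and invoking Lemma~\ref{lem1b}(iii), and then concludes by the maximum principle applied to the harmonic function $\im(\calC_\mu')$ on half-disks in $\C^-$. Your closing idea --- that $-\tilde k'\geqq0$ by the monotonicity of $k$, so that $\calC_\mu'$ should be, up to a real constant, the Cauchy transform of the positive measure $t^2(-\tilde k'(t))\,dt$ evaluated at $1/w\in\C^+$ --- can in fact be turned into a correct and shorter proof that avoids the maximum principle, but only after fixing the direction of the inversion, the sign convention for Cauchy transforms, and the cancellation of the affine term. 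As written, the proposal does not establish the lemma.
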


\begin{proof} For each $t$ in $\R\setminus\{0\}$ we put
  $\tilde{k}(t)=\sign(t)k(t)$.
Since $\mu\in L(\boxplus)\subseteq I(\boxplus)$, it
follows from Theorem~\ref{BV_char_of_FID} and \eqref{relations_eq}
that $\calC_\mu$ can be extended to the analytic function
$\calC_\mu\colon \C^-\to\C$ given by
\begin{equation*}
\begin{split}
\calC_{\mu}(w)&=w\int_{-1}^1\tilde{k}(t)\,dt
+\int_\R\Big(\frac{1}{1-wt}-1-wt1_{[-1,1]}(t)\Big)\frac{k(t)}{|t|}\,dt 
=\int_\R\Big(\frac{1}{1-wt}-1\Big)\frac{k(t)}{|t|}\,dt
\\[.2cm]
&=w\int_\R\frac{t}{1-wt}\frac{k(t)}{|t|}\,dt
=w\int_\R\frac{\tilde{k}(t)}{1-wt}\,dt
\end{split}
\end{equation*}
for any $w$ in $\C^-$. Setting $w=\frac{1}{z}$ we find for any $z$
in $\C^+$ that
\begin{equation}
\calC_{\mu}\big(\tfrac{1}{z}\big)=\frac{1}{z}
\int_\R\frac{\tilde{k}(t)}{1-\frac{t}{z}}\,dt
=\int_\R\frac{\tilde{k}(t)}{z-t}\,dt
=:G_{\tilde{k}}(z).
\label{lem3_eq1}
\end{equation}
Choosing $n$ in $\N$ such that the support of $k$ is contained in
$[-n,n]$, it follows by application of Lemma~\ref{lem1} to the restrictions
of $\tilde{k}$ to $(-n,0)$ and $(0,n)$ that $G_{\tilde{k}}$ and all
its derivatives can be
extended to continuous functions on $\C^+\cup(-n,0)\cup(0,n)$. Letting
$n\to\infty$, we conclude that
$G_{\tilde{k}}$ and all its derivatives can
be extended to continuous functions on
$\C^+\cup(\R\setminus\{0\})$. From \eqref{lem3_eq1}
we have that 
\begin{equation}
\calC_\mu'(\tfrac{1}{z})
=-z^2G_{\tilde{k}}'(z)
\label{lem3_eq2}
\end{equation}
for any $z$ in $\C^+$. In particular we thus deduce that the
function $z\mapsto\calC_\mu'(1/z)$ can be extended to a continuous
function on $\C^+\cup(\R\setminus\{0\})$, and hence $\calC_\mu'$ can
be extended to a continuous function on
$\C^-\cup(\R\setminus\{0\})$. With $n$ chosen as above, we note further by
dominated convergence that
\[
\calC_\mu'\big(\tfrac{1}{z}\big)
=\int_{-n}^n\frac{z^2}{(z-t)^2}\tilde{k}(t)\,dt
\longrightarrow \int_{-n}^n\tilde{k}(t)\,dt = \int_{\R}\tilde{k}(t)\,dt
\qquad\text{as $|z|\to\infty$, $z\in\C^+\cup\R$.} 
\]
It follows thus that the function $\Psi\colon\C^-\cup\R\to\C$ given by
\begin{equation*}
\Psi(w)=
\begin{cases}
\calC_\mu'(w), &\text{if $w\in\C^-\cup(\R\setminus\{0\})$,}\\
\int_{\R}\tilde{k}(t)\,dt, &\text{if $w=0$,}
\end{cases}
\end{equation*}
is continuous. In addition $\im(\Psi)$ is harmonic on $\C^-$. We shall
argue below that

\begin{itemize}

\item[(a)] $\im(\Psi(x))\leqq 0$ for any $x$ in $\R$.

\item[(b)] $\Psi(w)\to0$, as $|w|\to\infty$, $w\in\C^-\cup\R$.

\end{itemize}

Once (a) and (b) are verified, the proof is completed as follows:
Given any $\epsilon$ in $(0,\infty)$ and $w_0$ in $\C^-$, we
choose $R$ in $(0,\infty)$, such that $R>|w_0|$, and such that
$|\Psi(w)|\leqq\epsilon$ for all $w$ in $\C^-\cup\R$ satisfying that $|w|\geqq
R$. Putting $\gamma_R=\{R e^{i \theta}\mid\theta\in[-\pi,0]\}$, it
follows now by the maximum principle for harmonic functions that
\[
\im\big(\calC_\mu'(w_0)\big)=\im\big(\Psi(w_0)\big)\leqq
\sup\big\{\im(\Psi(w))\bigm|w\in[-R,R]\cup \gamma_R\big\}\leqq\epsilon,
\]
Since $\epsilon$ was arbitrary, we conclude that
$\im(\calC_\mu'(w_0))\leqq0$, as desired.

It remains to verify (a) and (b): Regarding (a) consider a fixed
number $a$ in $(0,\infty)$. Then for any $x$ in $(a,\infty)$ and any
positive integer $n$ it follows from \eqref{lem3_eq2} that
\begin{align*}
G_{\tilde{k}}(x+\tfrac{i}{n})
&=G_{\tilde{k}}(a+\tfrac{i}{n})+\int_a^xG_{\tilde{k}}'(t+\tfrac{i}{n})\,dt
\\
&=G_{\tilde{k}}(a+\tfrac{i}{n})
-\int_a^x(t+\tfrac{i}{n})^{-2}\calC_{\mu}'\big((t+\tfrac{i}{n})^{-1}\big)\,dt 
\xrightarrow[n\to\infty]{}G_{\tilde{k}}(a)
-\int_a^xt^{-2}\calC_{\mu}'\big(t^{-1}\big)\,dt, 
\end{align*}
where the convergence follows e.g.\ by uniform continuity of $z\mapsto
z^{-2}\calC_\mu'(z^{-1})$ on $[a,x]\times(i[0,1])$.
At the same time the method of Stieltjes Inversion yields for
Lebesgue-almost all $x$ in $(a,\infty)$ that
\begin{align*}
\tilde{k}(x)
=-\frac{1}{\pi}\lim_{n\to\infty}\im\big(G_{\tilde{k}}(x+\tfrac{i}{n})\big)
=-\frac{1}{\pi}\im(G_{\tilde{k}}(a))
+\frac{1}{\pi}\int_a^xt^{-2}\im\big(\calC_{\mu}'(t^{-1})\big)\,dt.
\end{align*} 
Since $\tilde{k}$ is continuous, this equality actually holds for all
$x$ in $(a,\infty)$, and hence we further deduce that
\begin{equation}
\tilde{k}'(x)=\frac{1}{\pi}x^{-2}\im\big(\calC_\mu'(x^{-1})\big)
\label{lem3_eq4}
\end{equation}
for all $x$ in $(a,\infty)$. Since $a$ was chosen arbitrarily in
$(0,\infty)$, \eqref{lem3_eq4} holds for all $x$ in $(0,\infty)$ and
by similar argumentation also for all $x$ in $(-\infty,0)$. Thus for
any $x$ in $\R\setminus\{0\}$, we conclude that
$
\im(\calC_\mu'(\tfrac{1}{x}))=\pi x^2\tilde{k}'(x)\leqq0
$
by the definition of $\tilde{k}$ and the assumptions on $k$.

Regarding (b), we show that $\calC_\mu'(\frac{1}{z})\to0$ as $z\to0$,
$z\in\C^+\cup\R\setminus\{0\}$. We note initially that 
\[
\calC_\mu'(\tfrac{1}{z})
=-z^2G_{\tilde{k}}'(z)
=\int_{0}^{b}\frac{z^{2}}{(z-t)^{2}}k(t)\,dt
-\int_{-b}^{0}\frac{z^{2}}{(z-t)^{2}}k(t)\,dt 
\]
for $z$ in $\C^+$. Moreover, the assumptions on $k$ entail
the existence of the limits $k'(0+)$ and $k'(0-)$, since (with $b$
chosen as above)
\[
k'(x)=-\int_x^bk''(t)\,dt\longrightarrow -\int_0^bk''(t)\, dt 
\qquad\text{as $x\downarrow0$},
\]
and similarly  
\[
k'(x)=\int_{-b}^x k''(t)\,dt\longrightarrow \int_{-b}^0k''(t)\, dt 
\qquad\text{as $x\uparrow0$}.
\]
The same argument ensures the existence of the limits $k''(0+)$ and
$k''(0-)$. 
Hence, for $z=x+iy$ in $\C^+$, we can perform integration by parts
twice as follows:
\begin{align*}
\int_{0}^{b}\frac{z^{2}}{(z-t)^{2}}k(t)\,dt
&=z^{2}\left[\frac{k(t)}{z-t}\right]_{0}^{b}
-z^{2}\int_{0}^{b}\frac{k'(t)}{z-t}\,dt
\\[.2cm]
&=-zk(0+)-z^{2}\Big(
  \big[-\log(z-t)k'(t)\big]_{0}^{b}+\int_{0}^{b}\log(z-t)k''(t)\,dt\Big)
\\[.2cm]
&=-zk(0+)-z^2\log(z)k'(0+)-z^2\int_0^b\log(z-t)k''(t)\,dt,
\end{align*}
where $\log$ is the standard branch of the logarithm on
$\C\setminus\{iy\mid y\le0\}$. 
Here $-zk(0+)-z^2\log(z)k'(0+)\to0$, as $z\to0$, $z\in\C^+$.
For the last term note that $k''$ extends to a continuous
function on $[0,b]$, since the limit $k''(0+)$ exists in $\R$ as
mentioned above. Hence we can apply Lemma~\ref{lem1b}(iii) to establish that
\begin{equation*}
\begin{split}
\limsup_{z\to0 \atop z\in\C^+}\Big|\int_0^b\log(z-t)k''(t)\,dt\Big|
&=
\limsup_{z\to0 \atop z\in\C^+}
\Big|\tfrac{1}{2}\int_0^b\log((t-x)^2+y^2)k''(t)\,dt
+i\int_0^b\arg(x-t+iy)k''(t)\,dt\Big|
\\[.2cm]
&\leqq\Big|\tfrac{1}{2}\int_0^bk''(t)\log(t^2)\,dt\Big|+\|k''\|_\infty b\pi
\\[.2cm]
&\leqq\tfrac{1}{2}\|k''\|_{\infty}\int_0^b|\log(t)|\,dt+\|k''\|_\infty b\pi
<\infty,
\end{split}
\end{equation*}
so that $z^2\int_0^b\log(z-t)k''(t)\,dt\to0$, as $z\to0$,
$z\in\C^+$. We conclude that
$\int_{0}^{b}\frac{z^{2}}{(z-t)^{2}}k(t)\,dt\to0$ as $z\to0$,
$z\in\C^+$, and similar arguments show that
$\int_{-b}^{0}\frac{z^{2}}{(z-t)^{2}}k(t)\,dt\to0$
as $z\to0$, $z\in\C^+$. Thus we have established that
$z^2G_{\tilde{k}}(z)\to0$ as $z\to0$, $z\in\C^+$, and since the 
function $z\mapsto z^2G_{\tilde{k}}(z)$ is continuous on
$\C^+\cup(\R\setminus\{0\})$, this immediately implies that the same
convergence holds as $z\to0$, $z\in\C^+\cup(\R\setminus\{0\})$.
\end{proof}

The following lemma is a modification of Lemma~4.1 in \cite{HaTh2015}. For
completeness we include a full proof in the appendix.

\begin{lem}\label{approximation_lemma}
Let $k\colon\R\setminus\{0\}\to[0,\infty)$ be a
  function which is increasing on $(-\infty,0)$, decreasing on
  $(0,\infty)$ and such that
  $\frac{k(t)}{|t|}1_{\R\setminus\{0\}}(t)\,dt$ is a L\'evy
  measure. 
Then there exists a sequence $(k_n)$ of functions
  $k_n\colon\R\setminus\{0\}\to[0,\infty)$
  satisfying the following conditions for all $n$ in $\N$:

\begin{enumerate}[\rm(a)]

\item $k_n$ has bounded support.

\item $k_n\in C^\infty(\R\setminus\{0\})$, and $k_n$ and all its
  derivatives are bounded functions.

\item $k_n$ is increasing on $(-\infty,0)$ and decreasing on
  $(0,\infty)$.

\item
$\displaystyle{
\frac{|t|k_n(t)}{1+t^2}\,dt\overset{\rm w}{\longrightarrow}
\frac{|t|k(t)}{1+t^2}\,dt}$
\quad as $n\to\infty$.

\end{enumerate}
\end{lem}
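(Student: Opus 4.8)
The plan is to produce the approximants in two stages: first truncate the support, then mollify. For the truncation, I would fix a smooth non-increasing cutoff $\chi\colon[0,\infty)\to[0,1]$ with $\chi\equiv 1$ on $[0,1]$ and $\chi\equiv 0$ on $[2,\infty)$, and set $\ell_n(t)=k(t)\,\chi(|t|/n)$ for $t\ne 0$. This kills nothing of $k$ near $0$, keeps $\ell_n$ bounded with bounded support, and — crucially for monotonicity — is still non-increasing on $(0,\infty)$ and non-decreasing on $(-\infty,0)$, since on $(0,\infty)$ it is a product of two non-negative non-increasing functions (one needs here that $k\ge 0$). Moreover $\ell_n\uparrow k$ pointwise on $\R\setminus\{0\}$ as $n\to\infty$, so by monotone convergence $\frac{|t|\ell_n(t)}{1+t^2}\,dt$ converges weakly (indeed in total variation on compacts, and with uniformly bounded mass since $\ell_n\le k$ and $\frac{|t|k(t)}{1+t^2}\,dt$ is finite by the L\'evy-measure hypothesis) to $\frac{|t|k(t)}{1+t^2}\,dt$. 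So it suffices to approximate each $\ell_n$ by a function satisfying (a)–(c) and close to it in the weak sense of (d); a diagonal argument then finishes.

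For the mollification of a fixed $\ell=\ell_n$ (compactly supported, bounded, monotone on each half-line), the subtlety is that $\ell$ may blow up near $0$, so a naive convolution $\ell * \varphi_\varepsilon$ is not even well-defined on all of $\R\setminus\{0\}$. I would instead mollify on the two half-lines separately using \emph{one-sided} mollifiers that preserve monotonicity and do not pull mass across $0$. Concretely, on $(0,\infty)$ pick $\varphi\in C_c^\infty((0,1))$ with $\varphi\ge 0$, $\int\varphi=1$, set $\varphi_\varepsilon(s)=\varepsilon^{-1}\varphi(s/\varepsilon)$, and define $\ell^{(\varepsilon)}(t)=\int_0^\infty \ell(t+s)\,\varphi_\varepsilon(s)\,ds$ for $t>0$; symmetrically on $(-\infty,0)$. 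Since $\varphi_\varepsilon$ is supported in $(0,\varepsilon)$ and $\ell$ is locally integrable on $(0,\infty)$ (it is bounded away from $0$), this is a well-defined $C^\infty$ function on $(0,\infty)$; it inherits compact support (shifted slightly left), boundedness of all derivatives (because $\ell^{(\varepsilon)}(t)\le \ell(t)$ wherever $\ell$ is defined, and $(\ell^{(\varepsilon)})^{(j)}$ is $\ell$ integrated against $\varphi_\varepsilon^{(j)}$, bounded by $\|\ell\|_\infty \varepsilon^{-j}\|\varphi^{(j)}\|_1$), and — the point of averaging over the forward shift — it is non-increasing on $(0,\infty)$, because $\ell^{(\varepsilon)}(t_1)=\int \ell(t_1+s)\varphi_\varepsilon(s)\,ds\ge\int \ell(t_2+s)\varphi_\varepsilon(s)\,ds=\ell^{(\varepsilon)}(t_2)$ whenever $0<t_1<t_2$, using $\ell(t_1+s)\ge\ell(t_2+s)$ pointwise. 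Finally, as $\varepsilon\downarrow 0$ one has $\ell^{(\varepsilon)}\to\ell$ in $L^1_{\mathrm{loc}}((0,\infty))$ and pointwise at continuity points of $\ell$, hence (being dominated by the integrable function $t\mapsto \frac{|t|\ell(t)}{1+t^2}$, since $\ell^{(\varepsilon)}\le\ell$ — wait, this last domination needs care near $0$; instead I would note $\ell^{(\varepsilon)}(t)\le \sup_{[t,t+\varepsilon]}\ell = \ell(t)$ for $t>0$ by monotonicity, so the domination is exactly $\ell^{(\varepsilon)}\le\ell$ on $(0,\infty)$) the weak convergence $\frac{|t|\ell^{(\varepsilon)}(t)}{1+t^2}\,dt \to \frac{|t|\ell(t)}{1+t^2}\,dt$ holds by dominated convergence. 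Combining the two half-lines gives an approximant of $\ell_n$ satisfying (a)–(c) and arbitrarily close in (d).

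The main obstacle is precisely the behavior near the origin: $k$ need not be locally bounded there, so all three of the conditions — being $C^\infty$ on $\R\setminus\{0\}$ with bounded derivatives, bounded support, and preserved monotonicity — must be achieved without ever needing to convolve across $0$ or to control $k$ near $0$ beyond the integrability of $\frac{|t|k(t)}{1+t^2}$ that the L\'evy hypothesis supplies. The one-sided forward/backward mollification is what reconciles "smoothing" with "monotonicity" and with "stay away from $0$": averaging $\ell$ only over $t+s$ with $s>0$ both keeps us on the side of $0$ where $\ell$ is already nice and automatically transmits monotonicity. Assembling the final sequence is then a standard diagonalization: for each $n$ choose $\varepsilon_n$ small enough that the $\varepsilon_n$-mollification of $\ell_n$ differs from $\ell_n$ by less than $1/n$ when tested against a countable convergence-determining family of bounded continuous functions (and has total mass within $1/n$ of that of $\frac{|t|k(t)}{1+t^2}\,dt$), and set $k_n$ to be that mollification; properties (a)–(c) hold by construction for every $n$, and (d) follows from the triangle inequality together with $\frac{|t|\ell_n(t)}{1+t^2}\,dt\overset{\mathrm w}{\to}\frac{|t|k(t)}{1+t^2}\,dt$.
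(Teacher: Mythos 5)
Your construction is essentially the paper's: truncate the support, then smooth by a \emph{one-sided} mollification (averaging $\ell(t+s)$ over $s>0$ on the positive half-line and symmetrically on the negative one), which is exactly the device the paper uses to reconcile smoothing with monotonicity and with the singularity at the origin; the weak convergence is likewise obtained by domination by $\frac{|t|k(t)}{1+t^2}$, integrable by the L\'evy hypothesis. The one point that needs repair is your treatment of conclusion (b). The claim that $\ell_n(t)=k(t)\chi(|t|/n)$ is bounded is false in general ($k$ may blow up at $0$, e.g.\ $k(t)=|t|^{-1/2}$ near $0$ is compatible with the L\'evy condition), and you in fact contradict it a few lines later; as a consequence the bound $\|\ell\|_\infty\,\varepsilon^{-j}\|\varphi^{(j)}\|_1$ for the derivatives of $\ell^{(\varepsilon)}$ is vacuous. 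The gap is easily closed in your own setup: since $\varphi\in C_c^\infty((0,1))$ has support in some $[\delta,1-\delta]$, the convolution only samples $\ell$ on $[t+\delta\varepsilon,\infty)\subseteq[\delta\varepsilon,\infty)$, where $\ell\leqq\ell(\delta\varepsilon)<\infty$ by monotonicity, so $\ell^{(\varepsilon)}$ and all its derivatives are bounded by $\ell(\delta\varepsilon)\,\varepsilon^{-j}\|\varphi^{(j)}\|_1$. The paper sidesteps this issue differently, by first capping the function near the origin at the constant value $k(\tfrac1n)$ on $(0,\tfrac1n)$ so that the object being mollified is genuinely bounded; either fix is fine, but as written your justification of (b) does not stand.
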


With the preceding lemmas in place we are now ready to prove the
following characterization of the freely selfdecomposable
distributions on $\R$.
 
\begin{thm}\label{main1}
For a probability measure $\mu$ on $\R$ the following statements are
equivalent: 
\begin{enumerate}[{\rm (i)}]
\item\label{aa} $\mu\in L(\boxplus)$.
\item\label{bb} The free cumulant transform $\calC_{\mu}$ of $\mu$ extends to
  an analytic map $\calC_{\mu}: \C^{-}\to\C$, satisfying that 
$\im(\calC_{\mu}'(w))\leqq 0$ for any $w\in\C^{-}$.
\item\label{cc}
There exists $\xi$ in $\R$ and a measure $\rho$ on $\R$, satisfying that
$\int_{\R}\log(|x|+2)\,\rho(dx)<\infty$, such that
$\calC_\mu'$ can be extended to all of $\C^-$ via the formula:
\begin{align}\label{corform}
\calC_{\mu}'(w)=\xi + \int_{\R}\frac{x+w}{1-xw}\,\rho(dx), \qquad (w\in\C^{-}).
\end{align}
\end{enumerate}
If \eqref{aa}-\eqref{cc} are satisfied, then the pair $(\xi,\rho)$ in
(iii) is unique, and the free characteristic triplet for $\mu$ is
given by $(a,\eta, \frac{k(x)}{|x|} dx)$, where  
\begin{align}
a&= \frac{1}{2}\rho(\{0\}), \notag
\\
\eta&= \xi+\int_{\R}x\left(1_{[-1,1]}(x)-\frac{1-x^2}{(1+x^2)^2}
\right) \frac{k(x)}{|x|}\, dx, \notag
\\
 k(x)&= \begin{cases}
\int_{x}^{\infty}\frac{1+y^{2}}{y^{2}}\rho(dy), &\text{if $x>0$,}
\\[2mm]
\int_{-\infty}^{x}\frac{1+y^{2}}{y^{2}}\rho(dy), &\text{if $x<0$.}
\label{eq_free_char_triplet}
\end{cases} 
\end{align}
\end{thm}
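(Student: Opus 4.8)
\medskip
\noindent\textbf{Proof proposal.}
The plan is to prove the cycle of implications (i)$\Rightarrow$(ii)$\Rightarrow$(iii)$\Rightarrow$(i) and then to extract the uniqueness of $(\xi,\rho)$ and the formulas in \eqref{eq_free_char_triplet} from the computations made along the way. Two bookkeeping mechanisms recur. The first is the passage, in both directions, between a weight $k$ satisfying the monotonicity required in \eqref{eqno2} (equivalently, a free L\'evy measure of that form) and a finite nonnegative measure $\rho$: starting from $\rho$ with $\int_{\R}\log(2+|x|)\,\rho(dx)<\infty$ one builds $k$ by the last line of \eqref{eq_free_char_triplet}, and Lemma~\ref{lem4} guarantees that this $k$ has the correct monotonicity and that $\tfrac{k(x)}{|x|}1_{\R\setminus\{0\}}(x)\,dx$ is a L\'evy measure; conversely, such a $k$ being monotone on each half-line induces a Lebesgue--Stieltjes measure $dk$, and one puts $\rho(dx)=\tfrac{x^{2}}{1+x^{2}}\,|dk|(x)$ on $\R\setminus\{0\}$ together with an atom $\rho(\{0\})=2a$. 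The second is the pair of elementary primitives $\tfrac{d}{dx}\bigl(\tfrac{x}{1-wx}\bigr)=\tfrac{1}{(1-wx)^{2}}$ and $\tfrac{d}{dw}\bigl(-\tfrac{1+x^{2}}{x^{2}}\log(1-xw)-\tfrac{w}{x}\bigr)=\tfrac{x+w}{1-xw}$, which are precisely what convert integration by parts into the translation between the free L\'evy--Khintchine form \eqref{eqno1} of $\calC_{\mu}$ and the representation \eqref{corform} of $\calC_{\mu}'$.

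For (i)$\Rightarrow$(ii), note first that since $\mu\in I(\boxplus)$ the transform $\calC_{\mu}$ already extends analytically to $\C^{-}$ (Theorem~\ref{BV_char_of_FID} and \eqref{relations_eq}), so only the inequality $\im(\calC_{\mu}'(w))\leqq0$ needs proof; moreover the semicircular part $aw^{2}$ and the drift $\eta w$ alter $\calC_{\mu}'$ only by a real-valued affine term $2aw+\text{const}$, whose imaginary part $2a\,\im(w)$ is $\leqq0$ on $\C^{-}$, so we may assume $a=0$ with the drift $\int_{-1}^{1}\sign(t)k(t)\,dt$, $k$ as in \eqref{eqno2}. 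Pick, via Lemma~\ref{approximation_lemma}, functions $k_{n}$ satisfying the hypotheses of Lemma~\ref{lem3} with $\tfrac{|t|k_{n}(t)}{1+t^{2}}\,dt\to\tfrac{|t|k(t)}{1+t^{2}}\,dt$ weakly (including total mass); by Lemma~\ref{lem3}, the corresponding $\mu_{n}\in L(\boxplus)$ satisfy $\im(\calC_{\mu_{n}}'(w))\leqq0$ on $\C^{-}$, and since that conclusion is insensitive to the drift we may choose the drifts so that $\mu_{n}\to\mu$ weakly. For the smooth $k_{n}$, integration by parts (all boundary terms vanishing since $k_{n}$ and its derivatives are bounded and $k_{n}$ has bounded support) puts $\calC_{\mu_{n}}'$ in the form \eqref{corform} with a constant $\xi_{n}$ and the measure $\rho_{n}=\tfrac{x^{2}}{1+x^{2}}|dk_{n}|(x)$, and the weak convergence above is a restatement of $\rho_{n}\to\rho:=\tfrac{x^{2}}{1+x^{2}}|dk|(x)$ weakly; as $x\mapsto\tfrac{x+w}{1-xw}$ is, for each fixed $w\in\C^{-}$, a bounded continuous function of $x\in\R$ (equal to $w$ at $x=0$, with $1-xw\neq0$ for real $x$), comparing $\int\tfrac{x+w}{1-xw}\,\rho_{n}(dx)\to\int\tfrac{x+w}{1-xw}\,\rho(dx)$ with the convergence $\calC_{\mu_{n}}'\to\calC_{\mu}'$ on the cone $\{1/w\in\Gamma_{\lambda,M}\}$ shows $\xi_{n}\to$ some $\xi$ and hence $\calC_{\mu_{n}}'(w)\to\xi+\int\tfrac{x+w}{1-xw}\,\rho(dx)$ throughout $\C^{-}$; this limit agrees with $\calC_{\mu}'$ on the cone and both are analytic on $\C^{-}$, so they coincide there, and passing $\im(\calC_{\mu_{n}}'(w))\leqq0$ to the limit gives (ii).

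For (ii)$\Rightarrow$(iii), observe that $g(z):=\calC_{\mu}'(1/z)$ is analytic on $\C^{+}$ with $\im(g(z))=\im(\calC_{\mu}'(1/z))\leqq0$ (as $1/z\in\C^{-}$), so $-g\colon\C^{+}\to\C^{+}\cup\R$ is a Nevanlinna function and admits a representation $-g(z)=\alpha+\beta z+\int_{\R}\tfrac{1+tz}{t-z}\,\rho(dt)$ with $\alpha\in\R$, $\beta\geqq0$, $\rho$ a finite nonnegative measure. Substituting $z=1/w$ and simplifying yields $\calC_{\mu}'(w)=\xi-\tfrac{\beta}{w}+\int_{\R}\tfrac{x+w}{1-xw}\,\rho(dx)$ with $\xi=-\alpha$. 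The term $-\beta/w$ must vanish: integrating it contributes a summand $-\beta\log w$ to $\calC_{\mu}(w)$, whereas $\calC_{\mu}(w)=wF^{-1}_{\mu}(1/w)-1\to0$ as $w\to0$ along $\{1/w\in\Gamma_{\lambda,M}\}$ (because $F^{-1}_{\mu}(z)/z\to1$ as $z\to\infty$ in $\Gamma_{\lambda,M}$), forcing $\beta=0$; fixing the constant of integration by $\calC_{\mu}(w)\to0$ then gives
\[
\calC_{\mu}(w)=\xi w+\tfrac12\rho(\{0\})w^{2}+\int_{\R\setminus\{0\}}\Bigl(-\tfrac{1+x^{2}}{x^{2}}\log(1-xw)-\tfrac{w}{x}\Bigr)\rho(dx),
\]
and finiteness of the right-hand side for $w$ near $0$ --- which holds because $\calC_{\mu}$ is defined and finite there --- forces $\int_{\R}\log(2+|x|)\,\rho(dx)<\infty$, the integrand being bounded as $x\to0$ and of magnitude $\asymp\log|x|$ as $|x|\to\infty$. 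This is (iii).

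For (iii)$\Rightarrow$(i) and the remaining identities, define $k$ from $\rho$ by the last line of \eqref{eq_free_char_triplet}; Lemma~\ref{lem4} then gives the required monotonicity of $k$ and that $\tfrac{k(x)}{|x|}1_{\R\setminus\{0\}}(x)\,dx$ is a L\'evy measure. Integrating \eqref{corform} (normalised by $\calC_{\mu}(w)\to0$) to the displayed formula above, then running the integration by parts in reverse --- rewriting the $\log(1-xw)$-integrals against $\rho$ as $\tfrac{1}{1-wy}$-integrals against $\tfrac{k(y)}{|y|}\,dy$ by means of Lemma~\ref{lem2} applied to the measure $dk$, with Lemma~\ref{lem1b} controlling the boundary behaviour of the $\log$-terms at the origin --- recasts $\calC_{\mu}$ in the free L\'evy--Khintchine form \eqref{eqno1} with triplet $\bigl(a,\eta,\tfrac{k(x)}{|x|}dx\bigr)$, $a=\tfrac12\rho(\{0\})$ and $\eta$ exactly the expression in \eqref{eq_free_char_triplet} (the correction term $x(1_{[-1,1]}(x)-\tfrac{1-x^{2}}{(1+x^{2})^{2}})$ recording the mismatch between the truncation in \eqref{eqno1} and the primitive of $\tfrac{x+w}{1-xw}$, noting $\tfrac{d}{dx}\tfrac{x}{1+x^{2}}=\tfrac{1-x^{2}}{(1+x^{2})^{2}}$). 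Since this free L\'evy measure has the form \eqref{eqno2}, $\mu\in L(\boxplus)$, closing the cycle. The pair $(\xi,\rho)$ is unique because, by the uniqueness in the Nevanlinna representation used above, $\rho$ is the Nevanlinna measure of $-\calC_{\mu}'(1/\cdot)$ and is therefore recovered from $\calC_{\mu}'$ (hence from $\mu$) by Stieltjes inversion, after which $\xi$ is pinned down by evaluating \eqref{corform} at a single point. The step I expect to be the main obstacle is making the two integration-by-parts translations between \eqref{eqno1} and \eqref{corform} fully rigorous: the boundary contributions at $x=0$ (equivalently $w=0$) are delicate, since for a general monotone $k$ with $\tfrac{k(x)}{|x|}\,dx$ a L\'evy measure one controls $x^{2}k(x)\to0$ but not $xk(x)\to0$, so the cancellations between the $\tfrac{w}{x}$- and $\log(1-xw)$-terms and the $[-1,1]$-truncation must be organised with care --- precisely the role of Lemma~\ref{lem1b} and of the restrictive hypotheses in Lemma~\ref{lem3} and Lemma~\ref{approximation_lemma}; a secondary difficulty is the limiting argument in (i)$\Rightarrow$(ii), where weak convergence of the approximants $\rho_{n}$ must be upgraded to convergence of the analytic functions $\calC_{\mu_{n}}'$ on all of $\C^{-}$, not just on the cone where the $\calC_{\mu_{n}}$ are a priori defined.
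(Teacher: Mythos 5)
Your overall architecture --- the cycle (i)$\Rightarrow$(ii)$\Rightarrow$(iii)$\Rightarrow$(i), the reduction to $a=0$ and drift-independence, the use of Lemma~\ref{approximation_lemma} and Lemma~\ref{lem3}, the Nevanlinna--Pick representation of $-\calC_\mu'(1/z)$ with the divergence argument as $w\to0$ killing the $-\beta/w$ term and producing the $\log$-moment condition, and the integration-by-parts translation between \eqref{corform} and \eqref{eqno1} via Lemma~\ref{lem2} --- coincides with the paper's proof. The steps (ii)$\Rightarrow$(iii) and (iii)$\Rightarrow$(i) are essentially as in the paper (two cosmetic differences: the vanishing of the boundary terms $\bigl[(\log(1-xz)+\tfrac{xz}{1+x^2})k(x)\bigr]_r^s$ is supplied by Lemma~\ref{lem4}(iv), not Lemma~\ref{lem1b}; and the paper fixes the additive constant at the very end by comparing $\lim_{y\uparrow0}\calC_\mu(iy)$ with $\lim_{y\uparrow0}\calC_{\mu'}(iy)$ rather than normalising a priori).

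The genuine gap is in your limit passage in (i)$\Rightarrow$(ii). You route the convergence $\calC_{\mu_n}'\to\calC_\mu'$ on $\C^-$ through the measures $\rho_n=\tfrac{x^2}{1+x^2}|dk_n|$ and assert that their weak convergence ``is a restatement of'' the weak convergence $\sigma_n=\tfrac{|t|k_n(t)}{1+t^2}\,dt\to\sigma$ supplied by Lemma~\ref{approximation_lemma}. It is not: $\rho_n$ and $\sigma_n$ are related by an integration by parts, not by a fixed bounded continuous density (their total masses already differ, since $\sigma_n(\R)=\int\tfrac{1+y^2}{2y^2}\log(1+y^2)\,\rho_n(dy)$ by Tonelli), so tightness of $(\rho_n)$, absence of mass escaping to $0$ or to $\infty$, and identification of the weak limit as $\tfrac{x^2}{1+x^2}|dk|$ all require separate arguments --- as does the finiteness and $\log$-integrability of that limit measure, which is a converse to Lemma~\ref{lem4} that you never prove. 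In effect you are proving (i)$\Rightarrow$(iii) directly, which is exactly the implication the cycle is designed to avoid. The paper sidesteps all of this: since each $\mu_n$ and the target $\mu_0$ are FID, one has on all of $\C^+$ the representations $\varphi_{\mu_n}(z)=\int\tfrac{1+tz}{z-t}\,\sigma_n(dt)$ and $\varphi_{\mu_n}'(z)=-\int\tfrac{1+t^2}{(z-t)^2}\,\sigma_n(dt)$, whose integrands are bounded and continuous in $t$ for each fixed $z$, so weak convergence of the finite measures $\sigma_n$ immediately gives $\varphi_{\mu_n}(z)\to\varphi_{\mu_0}(z)$ and $\varphi_{\mu_n}'(z)\to\varphi_{\mu_0}'(z)$, whence $\calC_{\mu_n}'(1/z)=\varphi_{\mu_n}(z)-z\varphi_{\mu_n}'(z)\to\calC_{\mu_0}'(1/z)$ pointwise on all of $\C^-$ --- no cone, no $\rho_n$, no weak-limit identification. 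You should replace your limiting argument by this one, or else supply the missing tightness and identification steps.
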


\begin{rem} 
It is a bit unexpected that the condition \eqref{bb} implies in particular that $\mu \in I(\boxplus)$ and hence the condition in Theorem \ref{BV_char_of_FID}: $\im(\varphi_\mu(z)) \leq0$ for all $z$ in $\C^+$. We provide an interpretation of this implication in terms of free cumulants in Remark \ref{PD}.  
\end{rem}

\begin{proofof}{\bf Proof of Theorem~\ref{main1}.}
\eqref{aa} $\Rightarrow$ \eqref{bb}: Assume that $\mu\in L(\boxplus)$
  with free characteristic triplet $(\eta,a,\frac{k(x)}{|x|}dx)$,
  where $k$ is increasing on $(-\infty,0)$ and decreasing on
  $(0,\infty)$. Then note that (cf.\ \eqref{eqno1})
\begin{equation}
\im\big(\calC_{\mu}'(w)\big)=2a\, \im(w)
+\im\Big(\frac{d}{dw}\int_{\R}\Big(\frac{1}{1-tw}-1-tw1_{[-1,1]}(t)\big)
\frac{k(t)}{|t|}\,dt\Big), \qquad(w\in\C^-).
\label{pf_of_Main1_eq1}
\end{equation}
Since $a\,\im(w)\leqq0$ for any $w$ in $\C^-$, we may assume without loss
of generality that $a=0$. Furthermore, since the right hand side of
\eqref{pf_of_Main1_eq1} does not depend on $\eta$, it suffices to show
that there exists a real constant $\eta_0$, such that
$\im(\calC_{\mu_0}'(w))\leqq0$ for all $w$ in $\C^-$, where $\mu_0$ is the
measure with free characteristic triplet $(0,\eta_0,\frac{k(t)}{|t|}\,dt)$.

By Lemma~\ref{approximation_lemma} we can choose a sequence
$(k_n)_{n\in\N}$ of functions satisfying conditions (a)-(c) of that
lemma, and such that $\sigma_n(d t)\to\sigma(d t)$ weakly as
$n\to\infty$,
where $\sigma_n(dt)=\frac{|t|k_n(t)}{1+t^2}\,dt$ and
$\sigma(dt)=\frac{|t|k(t)}{1+t^2}\,dt$.
Then let $\mu_n$ and $\mu_0$ be the measures in $L(\boxplus)$
with free generating pairs (cf.\ \eqref{eqno1a} and \eqref{eqno1b})
$(0,\sigma_n)$ and $(0,\sigma)$, respectively. For any fixed $z$ in
$\C^+$ we then have that
\[
\varphi_{\mu_n}(z)=\int_{\R}\frac{1+tz}{z-t}\sigma_n(dt)
\xrightarrow[n\to\infty]{}
\int_{\R}\frac{1+tz}{z-t}\sigma(dt)=\varphi_{\mu_0}(z)
\]
and that
\[
\varphi_{\mu_n}'(z)=-\int_{\R}\frac{1+t^2}{(z-t)^2}\sigma_n(dt)
\xrightarrow[n\to\infty]{}
-\int_{\R}\frac{1+t^2}{(z-t)^2}\sigma(dt)=\varphi_{\mu_0}'(z),
\]
as the functions $t\mapsto\frac{1+tz}{z-t}$ and
$t\mapsto\frac{1+t^2}{(z-t)^2}$ are both continuous and bounded on $\R$. This
further implies that
\[
\calC_{\mu_n}'(\tfrac{1}{z})=\varphi_{\mu_n}(z)
-z\varphi_{\mu_n}'(z)\xrightarrow[n\to\infty]{}
{\cal C}_{\mu_0}'(\tfrac{1}{z})
\]
for any $z$ in $\C^+$. By Lemma~\ref{lem3} 
we have that
$\im(\calC_{\mu_n}'(w))\leqq0$ for any $w$ in $\C^-$ and $n$ in $\N$,
and hence also $\im(\calC_{\mu_0}'(w))\leqq0$ for any $w$ in
$\C^-$. Since $\mu_0$ has free characteristic triplet
$(0,\eta_0,\frac{k(t)}{|t|})$ for some real constant $\eta_0$, we have
established the necessary condition described above.

\eqref{bb} $\Rightarrow$ \eqref{cc}: Assume that \eqref{bb} is satisfied.
Then the function $z\mapsto -\calC_{\mu}'\left(\frac{1}{z}\right)$ is
analytic from $\C^{+}$ into $\C^{+}\cup\R$, and hence by
Nevanlinna-Pick representation (see e.g.\ \cite[Formula (3.3)]{Akh1965}) 
there exist $c$ in $[0,\infty)$, $\xi$ in $\R$ and a finite measure
$\rho$ on $\R$ such that  
\begin{align*}
-\calC_{\mu}'\Big(\frac{1}{z}\Big) = c z-\xi +\int_{\R}
\frac{1+xz}{x-z}\,\rho(dx), \qquad(z\in\C^+).
\end{align*}
Then
\begin{align*}
\calC_{\mu}'(w)=-\frac{c}{w}+\xi+\int_{\R}\frac{x+w}{1-xw}\,\rho(dx), 
\qquad(w\in\C^{-}),
\end{align*}
and it remains to establish that $c=0$ and that
$\int_{\R}\log(|x|+2)\,\rho(dx)<\infty$. For $y$ in $(0,\infty)$ we
note that
\begin{align*}
\calC_{\mu}(-iy)
=\calC_{\mu}(-i)-i\int_{1}^{y}\calC_{\mu}'(-it)\,dt,
\end{align*}
so that
\begin{align*}
\re\left(\calC_{\mu}(-iy)\right)&=\re\left(\calC_{\mu}(-i)\right)+\im
\left(\int_{1}^{y}\calC_{\mu}'(-it)\,dt\right)
\\
&=\re\left( \calC_{\mu}(-i)\right)-\int_{1}^{y}
\left(\frac{c}{t}+\int_{\R}\frac{t(1+x^{2})}{1+t^{2}x^{2}}
\,\rho(dx)\right)\,dt
\\    
&=\re\left(\calC_{\mu}(-i)\right)
+c\log\big(\tfrac{1}{y}\big)
+\int_{\R}\Big(\int_{y}^{1}\frac{t(1+x^{2})}{1+t^{2}x^{2}}\,dt\,\Big)\rho(dx)
\\
&=\re\left( \calC_{\mu}(-i)\right)+c\log\big(\tfrac{1}{y}\big)
+\frac{1}{2}\int_{\R}
\frac{1+x^{2}}{x^{2}}\log\Big(\frac{1+x^{2}}{1+x^{2}y^{2}}\Big)\,\rho(dx). 
\end{align*}
Since $\varphi_{\mu}(iv)=o(v)$ as $v\to\infty$ (see
Bercovici-Voiculescu \cite[Proposition~5.6]{BeVo1993}), 
it follows that $\lim_{y\downarrow 0} \calC_{\mu}(-iy)=
-\lim_{y\downarrow 0}iy\varphi(iy^{-1})=0$.
On the other hand the monotone convergence theorem yields that
\begin{align*}
\lim_{y\downarrow 0}\int_{\R}
\frac{1+x^{2}}{x^{2}}\log\Big(\frac{1+x^{2}}{1+x^{2}y^{2}}\Big)\,\rho(dx) 
=\int_{\R}\frac{1+x^{2}}{x^{2}}\log(1+x^{2})\,\rho(dx)\in [0,\infty].
\end{align*}
We thus conclude that
\begin{align*}
0=\re \left(\calC_{\mu}(-i)\right)+c\cdot\infty+\int_{\R}
\frac{1+x^{2}}{x^{2}}\log(1+x^{2})\,\rho(dx).
\end{align*}
As a result, we obtain that $c=0$ and
$\int_{\R}\log(|x|+2)\,\rho(dx)<\infty$, as desired.

\eqref{cc} $\Rightarrow$ \eqref{aa}: Assume that \eqref{cc} holds, and
note initially that this implies that $\calC_\mu$ also
extends to an analytic function on all of $\C^-$.
Next let $2a=\rho(\{0\})$ and let $z$ be a fixed point
in $\C^-$. Then we denote by $[-i,z]$ the straight line from $-i$ to
$z$, and by $\int_{-i}^z{\cal C}_\mu'(\omega)\,d\omega$ the path
integral of ${\cal C}_\mu'$ along $[-i,z]$. Since $z,-i\in\C^-$, it is
standard to check that 
\[
\sup\big\{\big|\tfrac{x+\omega}{1-x\omega}\big|\bigm| x\in\R, \
\omega\in[-i,z]\big\}<\infty, 
\]
and hence we may apply Fubini's Theorem in the following calculation:
\begin{align*}
\calC_{\mu}(z)&=\calC_{\mu}(-i)+\int_{-i}^{z}\calC_{\mu}'(\omega)\,d\omega&\\
		      &=\calC_{\mu}(-i)+\int_{-i}^{z}\Big(\xi
                        +\int_{\R} 
                      \frac{x+\omega}{1-x\omega}\,\rho(dx) \Big)\,d\omega&\\
		      &=\calC_{\mu}(-i)
		      +\int_{-i}^{z}\Big(\xi +2a\omega 
		      + \int_{\R\backslash \{0\}}
                      \frac{x+\omega}{1-x\omega}\,\rho(dx) \Big)\,d\omega&\\  
		      &=\calC_{\mu}(-i)
		      +\xi(z+i)+a(z^{2}-i^{2})
		      +\int_{\R\backslash \{0\}}\Big(
		      \int_{-i}^{z}\frac{x+\omega}{1-x\omega}\,d\omega\Big)\,
                      \rho(dx)&\\ 
		      &=\calC_{\mu}(-i)+i\xi +a
		      +\xi z +az^{2}+\int_{\R\backslash \{0\}}
                      \Big[\Big(-\log(1-x\omega)-\frac{x\omega}{1+x^{2}}\Big)
                      \Big]_{\omega=-i}^{\omega=z}\frac{1+x^{2}}{x^{2}}\,\rho(dx),&    
\end{align*}
where $\log$ is the standard branch of the logarithm on
$\C\setminus(-\infty,0]$.  
By second order Taylor expansion, it follows that $\log(1-\omega
x)=-\omega x-\frac{1}{2}\omega^2x^2+o(x^2)$, and therefore
\begin{equation}
\log(1-\omega x)+\frac{x\omega}{1+x^2}
=-\frac{\omega x^3}{1+x^2}-\tfrac{1}{2}\omega^2x^2+o(x^2),
\quad\text{as $x\to0$,} 
\label{pf_of_Main1_eq2}
\end{equation}
for each fixed $\omega$ in $\C^-$.
This implies that
\[
\int_{[-1,1]\setminus\{0\}}
\Big|\log(1+xi)-\frac{xi}{1+x^{2}}\Big|\frac{1+x^2}{x^2}\,\rho(d x)
<\infty,
\]
and since also
\begin{align*}
\int_{\R\setminus[-1,1]}
\Big|\log(1+xi)-\frac{xi}{1+x^{2}}\Big|
\frac{1+x^2}{x^2}\,\rho(d x)
&\leqq\int_{\R\setminus[-1,1]}2|\log(1+xi)|\,\rho(d x)
+\int_{\R\setminus[-1,1]}\frac{1}{|x|}\,\rho(dx)
\\
&\leqq\int_{\R\setminus[-1,1]}2\big(\log(1+|x|)+\pi\big)\,\rho(d x)
+\rho(\R\setminus[-1,1])<\infty,
\end{align*} 
by the assumptions on $\rho$, it follows that the integral
$\int_{\R\setminus\{0\}}
(\log(1+xi)-\frac{xi}{1+x^{2}})\frac{1+x^2}{x^2}\,\rho(d x)$
is a well-defined complex number. We thus conclude that
\begin{equation}
{\cal C}_\mu(z)=A+\xi z+a z^2
+\int_{\R\setminus\{0\}}\Big(-\log(1-xz)-\frac{xz}{1+x^{2}}\Big)
\frac{1+x^2}{x^2}\,\rho(d x),
\label{pf_of_Main1_eq3}
\end{equation}
where 
$A={\cal C}_\mu(-i)+i\xi+a
+\int_{\R\setminus\{0\}}(\log(1+xi)-\frac{xi}{1+x^{2}})
\frac{1+x^2}{x^2}\,\rho(d x)$. 

Now we put
\begin{align*}
k(x):=
\begin{cases}
\int_{x}^{\infty}\frac{1+y^{2}}{y^{2}}\rho(dy), &\text{if $x>0$}\\
\int_{-\infty}^{x}\frac{1+y^{2}}{y^{2}}\rho(dy), &\text{if $x<0$}.
\end{cases}
\end{align*}
Then for any continuity points $r,s$ of $\rho$, such that $0<r<s$, we find by
application of Lemma~\ref{lem2} that
\begin{align}
\int_r^s&\Big(-\log(1-xz)-\frac{xz}{1+x^{2}}\Big)\frac{1+x^{2}}{x^{2}}
\,\rho(dx)\notag
\\ 
&=\Big[\Big(\log(1-xz)+\frac{xz}{1+x^{2}}\Big)k(x)\Big]_{r}^{s}
+\int_r^s\Big(\frac{z}{1-xz}-z\frac{d}{dx}\Big(\frac{x}{1+x^{2}}
\Big)\Big)k(x)\,dx \label{pf_of_Main1_eq4}
\\
&=\Big[\Big(\log(1-xz)+\frac{xz}{1+x^{2}}\Big)k(x)\Big]_{r}^{s}
+\int_r^s\Big(\frac{xz}{1-xz}-z\frac{x-x^3}{(1+x^2)^2}\Big)
\frac{k(x)}{x}\,dx.\notag
\end{align}
Here Lemma~\ref{lem4}\eqref{lem4-2} entails that
\begin{equation}
\Big[\Big(\log(1-xz)+\frac{xz}{1+x^{2}}\Big)k(x)\Big]_{r}^{s}
\longrightarrow0,\qquad\text{as $r\downarrow0$ and $s\uparrow\infty$.}
\label{pf_of_Main1_eq5}
\end{equation}
Note further that
\begin{equation}
\int_r^s\Big(\frac{xz}{1-xz}-z\frac{x-x^3}{(1+x^2)^2}\Big)
\frac{k(x)}{x}\,dx
=\int_{r}^s\Big(\frac{1}{1-xz}-1-xz1_{[-1,1]}(x)\Big)\frac{k(x)}{x}\,dx
+\int_r^sg_z(x)\frac{k(x)}{x}\,dx,
\label{pf_of_Main1_eq6}
\end{equation}
where
\[
g_z(x)=\frac{zx}{1-xz}-\frac{1}{1-xz}+1+xz1_{[-1,1]}(x)
-z\frac{x-x^3}{(1+x^2)^2}=zx\Big(1_{[-1,1]}(x)-\frac{1-x^2}{(1+x^2)^2}\Big).
\]
Since $g_z(x)\to0$ as $|x|\to\infty$ and $g_z(x)=o(x^2)$ as $x\to0$, and
since $\frac{k(x)}{x}\,dx$ is a L\'evy measure (cf.\
Lemma~\ref{lem4}), it follows that
$\int_0^{\infty}|g_z(x)|\frac{k(x)}{x}\,dx<\infty$.

Considering now sequences $(r_n)$ and $(s_n)$ of continuity points for
$\rho$, such that $r_n\to0$ and $s_n\to\infty$ as $n\to\infty$, it
follows by combining \eqref{pf_of_Main1_eq4}-\eqref{pf_of_Main1_eq6}
that
\begin{align*}
\int_0^\infty\Big(-&\log(1-xz)-\frac{xz}{1+x^{2}}\Big)\frac{1+x^{2}}{x^{2}}
\,\rho(dx)\\
&=\lim_{n\to\infty}\int_{r_n}^{s_n}
\Big(-\log(1-xz)-\frac{xz}{1+x^{2}}\Big)\frac{1+x^{2}}{x^{2}}
\,\rho(dx)
\\
&=z\int_0^{\infty}x\Big(1_{[-1,1]}(x)-\frac{1-x^2}{(1+x^2)^2}\Big)
\frac{k(x)}{x}\,dx
+\int_{0}^\infty\Big(\frac{1}{1-xz}-1-xz1_{[-1,1]}(x)\Big)\frac{k(x)}{x}\,dx.
\end{align*}
By similar arguments, it follows that
\begin{align*}
\int_{-\infty}^0\Big(-&\log(1-xz)-\frac{xz}{1+x^{2}}\Big)\frac{1+x^{2}}{x^{2}}
\,\rho(dx)\\
&=z\int_{-\infty}^0x\Big(1_{[-1,1]}(x)-\frac{1-x^2}{(1+x^2)^2}\Big)
\frac{k(x)}{|x|}\,dx
+\int_{-\infty}^0\Big(\frac{1}{1-xz}-1-xz1_{[-1,1]}(x)\Big)\frac{k(x)}{|x|}\,dx,
\end{align*}
and combining these two formulas with \eqref{pf_of_Main1_eq3}, we obtain
the expression:
\begin{equation}
{\cal C}_\mu(z)=A+\eta z+az^2
+\int_{\R}\Big(\frac{1}{1-xz}-1-xz1_{[-1,1]}(x)\Big)\frac{k(x)}{|x|}\,dx,
\end{equation}
where 
$\eta=\xi+\int_{\R}x(1_{[-1,1]}(x)-\frac{1-x^2}{(1+x^2)^2})\frac{k(x)}{|x|}\,dx$.
Finally, let $\mu'$ be the
measure in $L(\boxplus)$ with free characteristic triplet
$(a,\eta,\frac{k(x)}{|x|}\,dx)$. Then by two applications of
\cite[Proposition~5.6]{BeVo1993} we find that
$
0=\lim_{y\uparrow0}{\cal C}_\mu(iy)
=A+\lim_{y\uparrow0}{\cal C}_{\mu'}(iy)=A.
$
Thus $\mu=\mu'\in L(\boxplus)$, and this completes the proof.
\end{proofof}

Before stating the following corollary to Theorem~\ref{main1} we recall
that for a compactly supported probability measure $\mu$ on $\R$ the
$R$-transform $\calR_\mu$ can be extended analytically to an
open neighborhood of 0. Thus $\calC_\mu(z)=z\calR_\mu(z)$ admits a power series
expansion: 
\begin{equation}
\calC_\mu(z)=\sum_{n=1}^{\infty}\kappa_{n}(\mu)z^n
\label{eq_power_series_exp}
\end{equation}
in a ball around 0, and the coefficients $\{\kappa_n(\mu)\mid n\ge1\}$
are the free cumulants of $\mu$ (see e.g.\ \cite{BG2006}).
For a general measure $\mu$ on $\R$
with moments of all orders the free cumulants are defined from the
moments via M\"obius inversion (see \cite{NiSpBook}) and
\eqref{eq_power_series_exp} only holds as an asymptotic expansion (see
\cite{BG2006}). Recall that a sequence $\{a_n\}_{n=1}^\infty$ of real numbers is said to be \emph{conditionally positive definite} if the $N\times N$ matrix $\{a_{i+j}\}_{i,j=1}^{N}$ is positive definite for any $N \geq1$ (see \cite{NiSpBook}).  

\begin{cor}\label{sd_vs_cumulants}
Let $\mu$ be a Borel probability measure on $\R$ with moments of all
orders, and let $\{\kappa_{n}(\mu)\}_{n=1}^{\infty}$ be the free cumulant
sequence of $\mu$. Then the following statements hold:
 
\begin{enumerate}[{\rm (i)}] 

\item \label{BB}
	If $\mu$ is FSD then $\{n\kappa_{n}(\mu)\}_{n=1}^{\infty}$ is
        conditionally positive definite.  

\item \label{aaa}
	Suppose further that $\mu$ has compact support. Then $\mu$
        is FSD if and only if $\{n\kappa_{n}(\mu)\}_{n=1}^{\infty}$ 
        is conditionally positive definite. 

\end{enumerate}
\end{cor}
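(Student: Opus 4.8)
The plan is to connect the analytic condition \eqref{bb} of Theorem~\ref{main1} with the positive-definiteness of the shifted cumulant sequence $\{n\kappa_n(\mu)\}$, using the basic fact that a sequence $\{a_n\}_{n\ge1}$ is conditionally positive definite if and only if $\sum_{n\ge1} a_n z^n$ is (formally, or genuinely in the compactly supported case) the power series of a function mapping $\C^-$ analytically into $\overline{\C^-}=\C^-\cup\R$ — equivalently, a function whose imaginary part is $\le0$ on $\C^-$. The key observation linking this to Theorem~\ref{main1} is that if $\calC_\mu(z)=\sum_{n\ge1}\kappa_n(\mu)z^n$, then
\begin{equation*}
\calC_\mu'(z)=\sum_{n\ge1} n\kappa_n(\mu)\,z^{n-1},
\qquad\text{so}\qquad
z\calC_\mu'(z)=\sum_{n\ge1} n\kappa_n(\mu)\,z^n,
\end{equation*}
which is exactly the generating function of the sequence $\{n\kappa_n(\mu)\}$. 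Thus condition \eqref{bb} — that $\calC_\mu'$ extends analytically to $\C^-$ with $\im(\calC_\mu'(w))\le0$ there — should be recognized as the statement that $z\mapsto z\calC_\mu'(z)$, i.e. the generating function of $\{n\kappa_n(\mu)\}$, maps $\C^-$ into $\overline{\C^-}$ (note that multiplication by $z\in\C^-$ sends $\overline{\C^-}\setminus\{0\}$ into $\C^-$, so the half-plane property is essentially preserved; one checks the sign conventions and the boundary behaviour carefully).

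For part~\eqref{aaa}, under the compact support hypothesis $\calC_\mu$ is genuinely analytic in a neighbourhood of $0$ and the power series \eqref{eq_power_series_exp} converges there. One then invokes the classical characterization (see \cite{NiSpBook}, or a Hamburger/Nevanlinna-type argument): a real sequence $\{a_n\}_{n\ge1}$ with $\sum a_n z^n$ having positive radius of convergence is conditionally positive definite precisely when this power series is the germ at $0$ of a Pick-type function on $\C^-$, i.e. extends analytically to all of $\C^-$ with nonpositive imaginary part. Applying this with $a_n=n\kappa_n(\mu)$ and the generating function $z\calC_\mu'(z)$, conditional positive-definiteness of $\{n\kappa_n(\mu)\}$ becomes equivalent to: $z\calC_\mu'(z)$ extends to $\C^-\to\overline{\C^-}$. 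Dividing by $z$ (and handling the origin), this is equivalent to condition~\eqref{bb}, hence by Theorem~\ref{main1} to $\mu\in L(\boxplus)$. This gives the ``if and only if''.

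For part~\eqref{BB}, where $\mu$ need not have compact support, \eqref{eq_power_series_exp} holds only asymptotically, so one cannot argue via an honestly convergent power series. Instead I would use the analytic representation directly: assuming $\mu$ is FSD, Theorem~\ref{main1}(iii) gives $\calC_\mu'(w)=\xi+\int_\R\frac{x+w}{1-xw}\,\rho(dx)$ on $\C^-$, so
\begin{equation*}
w\calC_\mu'(w)=\xi w+\int_\R \frac{w(x+w)}{1-xw}\,\rho(dx),
\qquad(w\in\C^-),
\end{equation*}
and one checks that each integrand $w\mapsto\frac{w(x+w)}{1-xw}$ maps $\C^-$ into $\overline{\C^-}$ (a direct computation of the imaginary part, or recognizing it as a Möbius image of a half-plane), so $w\calC_\mu'(w)$ does too. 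Then the Hausdorff-type moment argument: the free cumulants, being defined from the moments by Möbius inversion, satisfy that the asymptotic expansion $\sum_{n\ge1} n\kappa_n(\mu)w^n$ of $w\calC_\mu'(w)$ at $0$ has its truncated Hankel-type matrices controlled by derivatives of $w\calC_\mu'(w)$ at $0$; a Pick function mapping into $\overline{\C^-}$ has all the relevant derivative data consistent with conditional positive definiteness of its Taylor coefficients, even when the function is only $C^\infty$ (not analytic) at the boundary point $0$. Concretely, for each $N$ one approximates/truncates and shows the matrix $\{(i+j)\kappa_{i+j}(\mu)\}_{i,j=1}^N$ is positive definite by testing $w\calC_\mu'(w)$ — or rather its imaginary part along a suitable path approaching $0$ — against the vector $(c_1,\dots,c_N)$, using that $\im(w\calC_\mu'(w))\le0$. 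The main obstacle is precisely this last point: making rigorous, in the non-compactly-supported case, the passage from the boundary regularity of the Pick function $w\calC_\mu'(w)$ at $0$ (which gives only an asymptotic expansion) to genuine conditional positive definiteness of the coefficient sequence; I expect this requires either a careful truncation argument with error estimates from Lemma~\ref{lem4}-type tail bounds on $\rho$, or an appeal to a known equivalence between conditional positive definiteness of a moment sequence and the existence of a representing (Pick/Nevanlinna) function, cited from \cite{NiSpBook} or a standard reference on the Hamburger moment problem.
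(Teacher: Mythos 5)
There is a genuine error at the heart of your argument: the claim that conditional positive definiteness of $\{a_n\}_{n\geqq1}$ corresponds to the function $\sum_{n\geqq1}a_nz^n$ mapping $\C^-$ analytically into $\C^-\cup\R$, together with the supporting assertion that ``multiplication by $z\in\C^-$ sends $\overline{\C^-}\setminus\{0\}$ into $\C^-$''. The latter is false, since $\C^-$ is not closed under multiplication (e.g.\ $(e^{-3\pi i/4})^2=e^{-3\pi i/2}=i\in\C^+$), and the former fails already for the standard semicircle law: there $\calC_\mu(z)=z^2$, the sequence $\{n\kappa_n(\mu)\}=(0,2,0,0,\dots)$ is conditionally positive definite and $\mu$ is FSD, yet $z\calC_\mu'(z)=2z^2$ does not map $\C^-$ into $\C^-\cup\R$. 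The same error recurs in your treatment of part (i), where you assert that each integrand $w\mapsto\frac{w(x+w)}{1-xw}$ maps $\C^-$ into $\overline{\C^-}$; at $x=0$ this is $w^2$, which does not. The correct generating function for conditional positive definiteness of $\{a_n\}_{n\geqq1}$ is $\sum_{n\geqq1}a_nw^{n-1}$ on $\C^-$ (equivalently $\sum_{n\geqq1}a_nz^{-(n-1)}$ on $\C^+$), exactly as in the familiar correspondence between conditional positive definiteness of $\{\kappa_n(\mu)\}$ and the Pick property of $\varphi_\mu(z)=\sum_n\kappa_n(\mu)z^{-(n-1)}$. For $a_n=n\kappa_n(\mu)$ this generating function is $\sum_{n\geqq1}n\kappa_n(\mu)w^{n-1}=\calC_\mu'(w)$ itself --- with no extra factor of $w$ --- so condition (ii) of Theorem~\ref{main1} is already, on the nose, the half-plane condition attached to the sequence $\{n\kappa_n(\mu)\}$. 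This is how the paper argues: for (i) it applies the second part of \cite[Theorem 3.2.1]{Akh1965} to $-\calC_\mu'(1/z)+\kappa_1(\mu)\colon\C^+\to\C^+\cup\R$, whose asymptotic expansion at infinity (valid to every finite order by \cite{BG2006}) has coefficients $(n+2)\kappa_{n+2}(\mu)$, concluding that these form a moment sequence; for (ii) it reverses this by writing $(n+2)\kappa_{n+2}(\mu)=\int_\R x^n\,\widetilde\rho(dx)$ for a compactly supported finite measure $\widetilde\rho$ (Proposition 13.14 of \cite{NiSpBook}, using compact support for the exponential bound on the cumulants) and resumming the series to land exactly on formula \eqref{corform}.

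Beyond the spurious factor of $z$, you correctly identify, but do not resolve, the genuine difficulty in part (i): passing from a merely asymptotic (non-convergent) expansion of a Pick function at a boundary point to positive semidefiniteness of the Hankel matrices built from its coefficients. This is precisely what the truncated-moment-problem half of Akhiezer's Theorem 3.2.1 delivers, in combination with the result of \cite{BG2006} guaranteeing that $\calC_\mu'(z)$ admits the asymptotic expansion $\sum_{n\geqq0}(n+1)\kappa_{n+1}(\mu)z^n$ to every finite order whenever all moments of $\mu$ exist. As written, your proposal neither supplies this step nor applies it to the correct function, so both parts of the argument would need to be redone with $\calC_\mu'$ in place of $z\calC_\mu'(z)$.
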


\begin{rem}\label{rem:exp_growth}
A Borel probability measure $\mu$ on $\R$ with finite moments of all
orders is compactly supported if and only if the sequence
$\{\kappa_n(\mu)\}_{n=1}^\infty$ does not grow faster than
exponentially; i.e., there exists $c>0$ such that $|\kappa_n(\mu)|
\leq c^n$ for all $n\geq1$. This is also equivalent to the property
that the L\'evy measure of $\mu$ is compactly supported. See
\cite[Lemma 13.13, Proposition 13.15]{NiSpBook}.     
\end{rem}

\begin{rem}\label{PD}
Suppose that $\mu$ is compactly supported. It is well known that $\mu$ is in $I(\boxplus)$ if and only if $\{\kappa_n(\mu)\}_{n=1}^\infty$ is conditionally positive definite (see e.g.\ \cite[Theorem 13.16]{NiSpBook}). Our result then shows the implication 
\[
\text{$\{n\kappa_{n}(\mu)\}_{n=1}^{\infty}$ is conditionally positive definite} \quad \Longrightarrow \quad \text{$\{\kappa_{n}(\mu)\}_{n=1}^{\infty}$ is conditionally positive definite}. 
\]
This implication can be proved more directly from the following two facts: The sequence $\{\frac{1}{n}\}_{n=1}^\infty$ is conditionally positive definite since 
$\frac{1}{n}$ is the $(n-1)$-th moment of the uniform distribution on $(0,1)$; the product of two conditionally positive definite sequences is again conditionally positive definite. 

\end{rem}

\begin{proofof}{\bf Proof of Corollary~\ref{sd_vs_cumulants}.}
\eqref{BB} \ From Theorem 1.3 in \cite{BG2006}, the asymptotic expansion of
the free cumulant transform exists up to any order, and then according
to Lemma~A.1 in \cite{BG2006}, the equation 
\begin{align}\label{AS}
\calC_{\mu}'(z) = \sum_{n=0}^{\infty}(n+1)\kappa_{n+1}(\mu)z^{n}.
\end{align}
holds in the sense of
an asymptotic expansion. Applying the second part of \cite[Theorem
3.2.1]{Akh1965} to the function $-\calC_\mu'(1/z)+\kappa_1(\mu)$ (which maps $\C^+$ into $\C^+\cup\R$ by Theorem \ref{main1}) then
implies that the sequence $\{(n+2)\kappa_{n+2}(\mu)\}_{n=0}^{\infty}$
is a moment sequence of a finite measure.

\eqref{aaa} \ The sufficiency is already proved in \eqref{BB}, so it suffices to show the necessity. This proof is similar to the discussion in \cite[Chapter
13]{NiSpBook}.  Suppose that  $\{n\kappa_{n}(\mu)\}_{n=1}^{\infty}$ is
conditionally positive definite. Since $\{n\kappa_n(\mu)\}_{n\geq1}$ does not grow
faster than exponentially, Proposition 13.14 in \cite{NiSpBook}
yields the existence of a finite measure $\widetilde{\rho}$ on $\R$
with compact support such that  
\begin{align*} 
(n+2)\kappa_{n+2}(\mu) = \int_\R x^{n}\,\widetilde{\rho}(dx),\qquad (n\geq 0).
\end{align*}
Therefore, for all $z$ with sufficiently small absolute value, we have
that (cf.\ \eqref{eq_power_series_exp})
\begin{align*}
\calC_{\mu}'(z) 
&= \kappa_{1}(\mu) +  \sum_{n=1}^{\infty}(n+1)\kappa_{n+1}(\mu)z^{n} 
= \kappa_{1}(\mu) +\sum_{n=1}^{\infty} \left(\int_{\R}
  x^{n-1}\widetilde{\rho}(dx)\right) z^{n}&\\ 
&= \kappa_{1}(\mu) +\sum_{n=1}^{\infty} \left(\int_\R
  (x^{n+1}+x^{n-1})\frac{\widetilde{\rho}(dx)}{1+x^{2}}\right)
z^{n}.&\\ 
&= \kappa_{1}(\mu)  - \int_\R x \,\rho(d x)+
\sum_{n=0}^{\infty}\int_\R x^{n+1} z^n \,\rho(d x) +
\sum_{n=0}^{\infty} \int_\R x^{n} z^{n+1} \,\rho(d x)  & \\ 
&=  \kappa_{1}(\mu) - \int_\R x \,\rho(d x) +\int_\R \frac{x+z}{1-xz}
\rho(dx), 
\end{align*}
where we put $\rho(dx)=\frac{\widetilde{\rho}(dx)}{1+x^{2}}$. From the
resulting expression of this calculation it follows that $\calC_\mu'$
extends to an analytic function on $\C^-$, and hence a correctly
chosen anti-derivative of $\calC_\mu'$ is an analytic extension of
$\calC_\mu$ to all $\C^-$. Since $\calC_\mu'$ has the form \eqref{corform} it follows from Theorem
\ref{main1} that $\mu$ is freely selfdecomposable. 
\end{proofof}

\section{Free selfdecomposability of the normal 
distribution}\label{sec:Proof_of_Main2}

In this section we prove that the classical normal (or Gaussian)
distributions belong to the class $L(\boxplus)$ of freely
selfdecomposable probability distributions, and more generally that the
Askey-Wimp-Kerov distributions $\mu_c$ belong to $L(\boxplus)$ for all
$c$ in $[-1,0]$. Apart from Theorem~\ref{main1} the proof is based on
results from Belinschi et al.\ \cite{BBLS2011} and the following Theorem due to Kerov (see
\cite[Theorem~8.2.5]{Ke1998}).

\begin{thm}\label{Kerovs_Thm}
For any $c$ in $(-1,\infty)$ there exists a probability measure
$\tau_c$ on $\R$, such that the following relation holds between the
Cauchy-Stieltjes transforms:
\[
-\frac{d}{d z}\log G_{\mu_c}(z)=G_{\tau_c}(z), \qquad(z\in\C^+).
\]
\end{thm}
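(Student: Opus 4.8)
The plan is to exhibit a probability measure $\tau_c$ with $G_{\tau_c}(z)=-\frac{d}{dz}\log G_{\mu_c}(z)$. Write $\Phi(z):=-\frac{d}{dz}\log G_{\mu_c}(z)=-G_{\mu_c}'(z)/G_{\mu_c}(z)$; this is holomorphic on $\C^+$ because $G_{\mu_c}$ maps $\C^+$ into $\C^-$ and is therefore zero-free there. By the Nevanlinna representation (cf.\ \cite[Formula (3.3)]{Akh1965}) it then suffices to verify that (a) $\im\Phi(z)\leqq0$ for all $z\in\C^+$, and (b) $\sup_{y>1}\lvert y\,\Phi(iy)\rvert<\infty$ and $\lim_{y\to\infty}iy\,\Phi(iy)=1$. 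Indeed, (a) says $-\Phi$ is a Pick function, so $-\Phi(z)=\alpha z+\beta+\int_{\R}\frac{1+tz}{t-z}\,\sigma(dt)$ for some $\alpha\geqq0$ and some finite positive measure $\sigma$; the growth conditions in (b) force $\alpha=0$ and $\beta=\int_\R t\,\sigma(dt)$, so the representation collapses to $\Phi(z)=\int_{\R}\frac{(1+t^2)\,\sigma(dt)}{z-t}=G_{\tau_c}(z)$ with $\tau_c:=(1+t^2)\sigma$, and $\tau_c(\R)=\lim_{y\to\infty}iy\,\Phi(iy)=1$.

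The crux is the closed-form identity
\[\Phi(z)=z-F_{\mu_c}(z)-c\,G_{\mu_c}(z),\qquad(z\in\C^+),\]
equivalently the Riccati equation $G_{\mu_c}'(z)=1-zG_{\mu_c}(z)+c\,G_{\mu_c}(z)^2$. I would derive it from the representation of $G_{\mu_c}$ as a constant multiple of the ratio $D_{-c-1}(w)/D_{-c}(w)$, with $w$ an affine function of $z$, established in \cite{BBLS2011}: apply the derivative relation $D_\nu'=\tfrac{1}{2}wD_\nu-D_{\nu+1}$ inside $(\log D_{-c-1})'$ and the relation $D_\nu'=-\tfrac{1}{2}wD_\nu+\nu D_{\nu-1}$ inside $(\log D_{-c})'$, both of which express $(\log D)'$ through the single ratio $D_{-c-1}/D_{-c}$ (equivalently through $G_{\mu_c}$ and $1/G_{\mu_c}$); the $\tfrac{1}{2}w$ contributions cancel in $(\log G_{\mu_c})'=(\log D_{-c-1})'-(\log D_{-c})'$, leaving precisely the three terms above, and the normalising constants are fixed by $G_{\mu_c}(z)\sim1/z$ at infinity. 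One may also read the identity as $z-F_{\mu_c}(z)=G_{\pi_c}(z)$, where $\pi_c$ is the positive ``Boolean cumulant measure'' of $\mu_c$ of total mass $\mathrm{Var}(\mu_c)=c+1$, so that $\tau_c=\pi_c-c\,\mu_c$ as a signed measure of mass $(c+1)-c=1$. Granting the identity, (b) follows from the Stolz-angle expansions $F_{\mu_c}(z)=z-(c+1)/z+o(1/z)$ and $G_{\mu_c}(z)=1/z+o(1/z)$ ($\mu_c$ is symmetric with variance $c+1$), which give $\Phi(z)=1/z+o(1/z)$, while the uniform bound on $\lvert y\,\Phi(iy)\rvert$ comes from $\lvert z-F_{\mu_c}(z)\rvert\leqq(c+1)/\im z$ and $\lvert G_{\mu_c}(z)\rvert\leqq1/\im z$.

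It remains to prove (a). Decompose $\im\Phi(z)=\bigl(\im z-\im F_{\mu_c}(z)\bigr)-c\,\im G_{\mu_c}(z)$. The first bracket is $\leqq0$ by the classical inequality $\im F_\mu(z)\geqq\im z$, valid for the reciprocal Cauchy transform of any probability measure (it amounts to the fact that the coefficient of $z$ in the Nevanlinna representation of $F_\mu$ equals $1$). For the second term: when $c\in[-1,0]$ we have $-c\geqq0$ while $\im G_{\mu_c}(z)<0$, hence $-c\,\im G_{\mu_c}(z)\leqq0$ and therefore $\im\Phi(z)\leqq0$. This settles the theorem for every $c\in[-1,0]$ — precisely the range used in Section~\ref{sec:Proof_of_Main2} — and in particular for the standard Gaussian $\mu_0$, where it reduces to $\Phi(z)=z-F_{\mu_0}(z)$, i.e.\ to the elementary ODE $G_{\mu_0}'(z)=1-zG_{\mu_0}(z)$.

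The main obstacle is the sign of the second term when $c>0$: there $-c\,\im G_{\mu_c}(z)\geqq0$ works against the nonpositive first bracket, so the decomposition above no longer decides the sign, and one must genuinely show that the signed measure $\pi_c-c\,\mu_c$ is nonnegative (i.e.\ that the Boolean cumulant density of $\mu_c$ dominates $c\,\kappa_c$ pointwise). This is where Kerov's continued-fraction argument — or a sharper estimate on $F_{\mu_c}$ — is needed; since the present paper only invokes the theorem for $c\in[-1,0]$, I would be content with the clean argument above in that range.
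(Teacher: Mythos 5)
First, note that the paper does not prove this statement at all: it is quoted verbatim from Kerov \cite[Theorem~8.2.5]{Ke1998}, so there is no internal proof to compare against. What you have written is a genuine attempt at a proof, and within the range $c\in(-1,0]$ it is correct and self-contained modulo the inputs you cite: the Riccati identity $\Phi(z)=-G_{\mu_c}'(z)/G_{\mu_c}(z)=z-F_{\mu_c}(z)-c\,G_{\mu_c}(z)$ is exactly formula (3.5) of \cite{BBLS2011} (which the paper itself invokes in the proof of Theorem~\ref{main2}, there only on $\Omega\setminus\C^+$); the sign argument $\im\big(z-F_{\mu_c}(z)\big)\leqq0$ together with $-c\,\im G_{\mu_c}(z)\leqq0$ for $c\leqq0$ correctly gives $\im\Phi\leqq0$ on $\C^+$; and the passage from a Pick-type function with $iy\,\Phi(iy)\to1$ and $\sup_y|y\,\Phi(iy)|<\infty$ to $\Phi=G_{\tau_c}$ for a \emph{probability} measure is the standard Nevanlinna argument, with the asymptotics justified by $\mathrm{Var}(\mu_c)=c+1$ (read off the first Jacobi coefficient) and $|z-F_{\mu_c}(z)|\leqq(c+1)/\im z$. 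This is in fact a nice observation: for the only values of $c$ the paper uses, the appeal to Kerov could be replaced by this elementary argument, which moreover delivers precisely the strengthened conclusion the proof of Theorem~\ref{main2} needs on $\C^+$ (namely $F_{\mu_c}/F_{\mu_c}'=F_{\tau_c}$ for a probability measure $\tau_c$, so that $\im F_{\tau_c}(z)\geqq\im z$ applies).

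The genuine gap is the one you name yourself: the theorem is asserted for all $c\in(-1,\infty)$, and your decomposition $\tau_c=\pi_c-c\,\mu_c$ leaves the positivity of this signed measure undecided when $c>0$, since then the two terms in $\im\Phi(z)=\big(\im z-\im F_{\mu_c}(z)\big)-c\,\im G_{\mu_c}(z)$ have opposite signs. Being candid about this does not close it; as written your argument does not prove the stated theorem, only its restriction to $c\in(-1,0]$. Kerov's own proof goes through the theory of interlacing measures and the continued-fraction (Jacobi-parameter) structure of $G_{\mu_c}$, which handles all $c>-1$ uniformly; that machinery, or a genuinely sharper estimate on $F_{\mu_c}$, would be needed to finish. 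If you wish to keep your argument, the honest options are either to restate the lemma for $c\in(-1,0]$ only (which suffices for everything in Section~\ref{sec:Proof_of_Main2}) or to retain the citation of \cite{Ke1998} for the case $c>0$.
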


As a final preparation we introduce the class $\calUI$ consisting of
those Borel probability measures on $\R$ for which
there exists a simply connected domain $\Omega$ in $\C$, such that
$\Omega\supset\C^{+}$ and such that the reciprocal Cauchy-Stieltjes
transform $F_{\mu}$ can be extended to an analytic bijection $F_{\mu}\colon
\Omega \to \C^{+}$. If $\mu$ is in $\calUI$, then it is FID, as was
proved in \cite{ArHa2013}. For distributions in $\calUI$
Theorem~\ref{main1} then yields the following characterization of free
selfdecomposability:

\begin{lem}\label{lem5}
Let $\mu$ be a measure in $\calUI$ with domain $\Omega$ as described above. Then the following statements are
equivalent: 
\begin{enumerate}[{\rm (i)}]
\item
$\mu\in L(\boxplus)$.
\item
$\im\big(\omega -\frac{F_{\mu}(\omega)}{F_{\mu}'(\omega)}\big)\leqq 0$ 
\ for all $\omega$ in $\Omega$.
\end{enumerate}
\end{lem}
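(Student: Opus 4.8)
The plan is to reduce the equivalence to Theorem~\ref{main1}\eqref{bb} by expressing $\calC_\mu'$ in terms of the reciprocal Cauchy transform $F_\mu$ on the domain $\Omega$. Since $\mu\in\calUI$, it is FID, so $\calC_\mu$ (equivalently the Voiculescu transform $\varphi_\mu$) already extends analytically to $\C^-$; what needs to be controlled is the sign of $\im(\calC_\mu'(w))$ for all $w$ in $\C^-$. First I would recall the defining relation $\calC_\mu(w)=wF_\mu^{-1}(1/w)-1$ from \eqref{def_Cmu_eq}, valid on a truncated cone, and observe that because $F_\mu\colon\Omega\to\C^+$ is a bijection with $\Omega\supset\C^+$, the inverse $F_\mu^{-1}$ is a well-defined analytic map $\C^+\to\Omega$. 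Writing $z=1/w$ for $w\in\C^-$, so that $z\in\C^+$, and substituting $\omega=F_\mu^{-1}(z)\in\Omega$, a direct differentiation gives
\begin{equation*}
\calC_\mu'(w)=\frac{d}{dw}\big(wF_\mu^{-1}(1/w)-1\big)
=F_\mu^{-1}(1/w)-\frac{1}{w}\cdot\frac{(F_\mu^{-1})'(1/w)}{1}
=\omega-\frac{z}{F_\mu'(\omega)}=\omega-\frac{F_\mu(\omega)}{F_\mu'(\omega)},
\end{equation*}
using $(F_\mu^{-1})'(z)=1/F_\mu'(\omega)$ and $z=F_\mu(\omega)$. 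Thus $\calC_\mu'(w)=\omega-F_\mu(\omega)/F_\mu'(\omega)$ where $\omega=F_\mu^{-1}(1/w)$.

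Next I would upgrade this identity from the truncated cone where \eqref{def_Cmu_eq} was originally stated to all of $\C^-$. Both sides are analytic: the left side on $\C^-$ by the FID property of $\mu\in\calUI$, and the right side because $w\mapsto F_\mu^{-1}(1/w)$ is analytic from $\C^-$ onto $\Omega$ (note $F_\mu'$ is nonvanishing on $\Omega$ since $F_\mu$ is a bijection there, so $F_\mu/F_\mu'$ is analytic on $\Omega$). Since the two analytic functions agree on an open subset of the connected set $\C^-$, they agree on all of $\C^-$ by the identity theorem. Consequently, as $w$ ranges over $\C^-$, the point $\omega=F_\mu^{-1}(1/w)$ ranges over all of $\Omega$ (bijectively), and we obtain the exact correspondence
\begin{equation*}
\big\{\calC_\mu'(w)\mid w\in\C^-\big\}=\big\{\omega-\tfrac{F_\mu(\omega)}{F_\mu'(\omega)}\mid\omega\in\Omega\big\},
\end{equation*}
matched pointwise via $w=1/F_\mu(\omega)$.

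With this in hand the equivalence is immediate. By Theorem~\ref{main1}, since $\mu$ is already known to be FID (so the analytic extension of $\calC_\mu$ to $\C^-$ exists automatically), we have $\mu\in L(\boxplus)$ if and only if $\im(\calC_\mu'(w))\leqq0$ for all $w$ in $\C^-$. By the pointwise identity just established, this holds if and only if $\im\big(\omega-F_\mu(\omega)/F_\mu'(\omega)\big)\leqq0$ for all $\omega$ in $\Omega$, which is precisely statement (ii). I expect the main obstacle to be the bookkeeping around the domain of validity: one must be careful that \eqref{def_Cmu_eq} defines $\calC_\mu$ only on a truncated Stolz angle a priori, that the substitution $\omega=F_\mu^{-1}(1/w)$ is legitimate there (which requires matching the univalence region of $F_\mu$ used in \eqref{def_Cmu_eq} with the global domain $\Omega$), and that the analytic continuation argument covers all of $\C^-$ rather than just the cone. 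Everything else is a routine chain-rule computation.
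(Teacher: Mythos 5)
Your proposal is correct and follows essentially the same route as the paper: write $\calC_\mu(w)=wF_\mu^{-1}(1/w)-1$ on $\C^-$ by analytic continuation, differentiate to get $\calC_\mu'(w)=\omega-F_\mu(\omega)/F_\mu'(\omega)$ with $\omega=F_\mu^{-1}(1/w)$, use the bijectivity of $F_\mu\colon\Omega\to\C^+$ to identify condition (ii) with $\im(\calC_\mu'(w))\leqq0$ on $\C^-$, and conclude via Theorem~\ref{main1}. The extra care you take with the domain bookkeeping and the identity theorem is sound but not a departure from the paper's argument.
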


\begin{proof}
By the definition of the free cumulant transform (see
\eqref{def_Cmu_eq}) and analytic continuation we have that
$\calC_{\mu}(z) = z F_{\mu}^{-1}\left(\frac{1}{z}\right)-1$ for all
$z$ in $\C^-$. Setting $\omega=F_{\mu}^{-1}(\frac{1}{z})\in\Omega$, we
then get that
\begin{align*}
\calC_{\mu}'(z) =
F_{\mu}^{-1}\Big(\frac{1}{z}\Big)-\frac{1}{z}(F_{\mu}^{-1})'
\Big(\frac{1}{z}\Big) 
=\omega -\frac{F_{\mu}(\omega)}{F_{\mu}'(\omega)}.
\end{align*}
Since $F_\mu\colon\Omega\to\C^+$ is a bijection, condition (ii) in the
lemma is thus equivalent to the condition that $\im(\calC_\mu'(z))\leqq0$
for all $z$ in $\C^-$. According to Theorem~\ref{main1} the latter
condition is, in turn, equivalent to (i) in the lemma. 
\end{proof}

\begin{thm}\label{main2}
For any $c$ in $[-1,0]$ the Askey-Wimp-Kerov distribution $\mu_c$ is
freely selfdecomposable. 
\end{thm}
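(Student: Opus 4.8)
The plan is to reduce the claim to an application of Lemma~\ref{lem5} and Kerov's Theorem~\ref{Kerovs_Thm}, the only real content being a sign computation for $\im G_{\tau_c}$. The case $c=-1$ is immediate: $\mu_{-1}=\delta_0$, whose free cumulant transform is identically $0$, so $\mu_{-1}\in L(\boxplus)$ by Theorem~\ref{main1}. So fix $c\in(-1,0]$. I would take as input two facts about $\mu_c$, established in or obtainable by the methods of \cite{BBLS2011}: (i) $\mu_c\in\calUI$, with $F_{\mu_c}$ extending to an analytic bijection of a simply connected domain $\Omega_c\supseteq\C^+$ onto $\C^+$; and (ii) $G_{\mu_c}$ satisfies the Riccati equation
\[
G_{\mu_c}'(z)=1-zG_{\mu_c}(z)+cG_{\mu_c}(z)^2
\]
(elementary for $c=0$, and a contiguity relation for the associated Hermite polynomials in general, cf.\ \cite{AsWi1984}). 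Since $G_{\mu_c}=1/F_{\mu_c}$ is analytic and non-vanishing on $\Omega_c$, both this identity and Kerov's relation $G_{\tau_c}=-G_{\mu_c}'/G_{\mu_c}$ extend to $\Omega_c$ by analytic continuation.

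The central computation would be to determine the sign of $\im G_{\tau_c}$ on all of $\Omega_c$. Substituting (ii) into $G_{\tau_c}=-G_{\mu_c}'/G_{\mu_c}$ gives, on $\Omega_c$,
\[
G_{\tau_c}(\omega)=\omega-F_{\mu_c}(\omega)-cG_{\mu_c}(\omega);
\]
via the continued fraction $F_{\mu_c}=z-(c+1)G_{\mu_{c+1}}$ this reads $\tau_c=(c+1)\mu_{c+1}+(-c)\mu_c$, a genuine convex combination of probability measures precisely because $c\in[-1,0]$. For $\omega\in\C^+$ one has $\im G_{\tau_c}(\omega)<0$ since $\tau_c$ is a probability measure. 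For $\omega\in\Omega_c\cap\overline{\C^-}$, fact (i) forces $F_{\mu_c}(\omega)\in\C^+$, hence $\im F_{\mu_c}(\omega)>0$ and $\im G_{\mu_c}(\omega)=\im(1/F_{\mu_c}(\omega))<0$; combined with $c\le0$ and $\im\omega\le0$ the displayed formula yields
\[
\im G_{\tau_c}(\omega)=\im\omega-\im F_{\mu_c}(\omega)+(-c)\,\im G_{\mu_c}(\omega)<0 .
\]
Thus $\im G_{\tau_c}<0$ throughout $\Omega_c$; in particular $G_{\tau_c}$ is non-vanishing there, and the analytic extension of $F_{\tau_c}=1/G_{\tau_c}$ maps $\Omega_c$ into $\C^+$.

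To finish I would verify the criterion of Lemma~\ref{lem5}. By Kerov's theorem $F_{\mu_c}(\omega)/F_{\mu_c}'(\omega)=1/G_{\tau_c}(\omega)=F_{\tau_c}(\omega)$ on $\Omega_c$, so Lemma~\ref{lem5}(ii) amounts to $\im(\omega-F_{\tau_c}(\omega))\le0$ for every $\omega\in\Omega_c$. If $\omega\in\C^+$ this is the Nevanlinna estimate $\im F_{\tau_c}(\omega)\ge\im\omega$, valid for the reciprocal Cauchy transform of any probability measure (evident from the Nevanlinna representation of $F_{\tau_c}$). If $\omega\in\Omega_c\cap\overline{\C^-}$ then $\im\omega\le0<\im F_{\tau_c}(\omega)$ by the previous step, so $\im(\omega-F_{\tau_c}(\omega))<0$. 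Lemma~\ref{lem5} then gives $\mu_c\in L(\boxplus)$, and taking $c=0$ recovers the normal distribution.

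The step I expect to be the genuine obstacle is input (i): that $F_{\mu_c}$ extends to a univalent map of a domain $\Omega_c\supseteq\C^+$ onto $\C^+$. This is the delicate part of \cite{BBLS2011}, resting on the analytic theory of the Faddeeva/parabolic-cylinder functions, and it is exactly what makes $\Omega_c$ available and guarantees that $F_{\mu_c}$ stays in $\C^+$ on it. Once (i) and (ii) are in hand the rest is soft; and the hypothesis $c\le0$ enters only in the sign computation for $\im G_{\tau_c}$ on $\Omega_c\cap\overline{\C^-}$.
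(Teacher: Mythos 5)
Your proof is correct and follows essentially the same route as the paper's: reduce to Lemma~\ref{lem5}, use Kerov's theorem on $\C^+$, and on $\Omega_c\cap\overline{\C^-}$ use the identity $F_{\mu_c}'/F_{\mu_c}=\omega-F_{\mu_c}-c/F_{\mu_c}$ (your Riccati equation for $G_{\mu_c}$ is exactly formula (3.5) of \cite{BBLS2011}) together with $c\leqq0$ and $F_{\mu_c}(\Omega_c)\subseteq\C^+$. The only genuine addition is your explicit identification $\tau_c=(c+1)\mu_{c+1}+(-c)\mu_c$, which is a pleasant but inessential extra.
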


\begin{proof} When $c=-1$, $\mu_c$ is a Dirac measure and the theorem is
  trivial. So let $c$ be a fixed number in $(-1,0]$. According to the
  proof of Theorem~3.1 in \cite{BBLS2011} we have that
  $\mu_c\in\mathcal{UI}$, so the 
reciprocal Cauchy transform $F_{\mu_c}$ extends to an analytic bijection
$F_{\mu_c}\colon\Omega\to\C^+$ defined on some region $\Omega$
containing $\C^+$ (and depending on $c$).
According to Lemma~\ref{lem5} we then have to establish that
\[
\im\Big(z-\frac{F_{\mu_c}(z)}{F_{\mu_c}'(z)}\Big)\leqq0
\]
for any $z$ in $\Omega$.
We consider first $z$ in $\C^+$ and observe that
\[
\frac{F_{\mu_c}'(z)}{F_{\mu_c}(z)}= G_{\mu_c}(z)\frac{d}{dz}\Big(\frac{1}{G_{\mu_c}(z)}\Big)
=-\frac{G_{\mu_c}'(z)}{G_{\mu_c}(z)}=-\frac{d}{dz}\log G_{\mu_c}(z), 
\]
where $\log$ is the standard branch of the logarithm on
$\C\setminus\{iy\mid y\geqq0\}$.
Thus, according to Kerov's Theorem (Theorem~\ref{Kerovs_Thm}), there
exists a probability measure $\tau_c$ on $\R$, such that
\[
\frac{F_{\mu_c}'(z)}{F_{\mu_c}(z)}=G_{\tau_c}(z), 
\quad\text{or equivalently}\quad
\frac{F_{\mu_c}(z)}{F_{\mu_c}'(z)}=F_{\tau_c}(z),
\qquad(z\in\C^+).
\]
This implies in particular that
\[
\im\Big(z-\frac{F_{\mu_c}(z)}{F_{\mu_c}'(z)}\Big)
=\im\big(z-F_{\tau_c}(z)\big)<0, \qquad(z\in\C^+),
\]
where the inequality follows from Corollary~5.3 in \cite{BeVo1993}.

Next consider $\omega$ in $\Omega\setminus\C^+$. Then according to 
formula (3.5) in \cite{BBLS2011} we have that
\[
\frac{F_{\mu_c}'(\omega)}{F_{\mu_c}(\omega)}
=\omega-F_{\mu_c}(\omega)-\frac{c}{F_{\mu_c}(\omega)},
\]
and therefore
\[
\im\Big(\frac{F_{\mu_c}'(\omega)}{F_{\mu_c}(\omega)}\Big)
=\im(\omega)-\im\big(F_{\mu_c}(\omega)\big)-c\, \im\big(1/F_{\mu_c}(\omega))\leqq0,
\]
since $\im(\omega)\leqq0$ and $-c\geqq0$, and since
$F_{\mu_c}(\omega)\in\C^+$, so that $1/F_{\mu_c}(\omega)\in\C^-$.
It follows that
\[
\im\Big(\omega-\frac{F_{\mu_c}(\omega)}{F_{\mu_c}'(\omega)}\Big)
=\im(\omega)-\im\Big(\frac{1}{F_{\mu_c}'(\omega)/F_{\mu_c}(\omega)}\Big)
\leqq0,
\]
and this completes the proof.
\end{proof}

\begin{cor}
For any $\xi$ in $\R$ and $\sigma$ in $(0,\infty)$, the normal
distribution $N(\xi,\sigma^2)$ is freely selfdecomposable.
\end{cor}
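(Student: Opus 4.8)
The plan is to obtain the corollary as an immediate consequence of the case $c=0$ of Theorem~\ref{main2}, together with the stability of the class $L(\boxplus)$ under increasing affine transformations; the latter is transparent from the characterization in Theorem~\ref{main1}.

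Recall that $\mu_0=N(0,1)$, so Theorem~\ref{main2} gives $N(0,1)\in L(\boxplus)$, and hence, by Theorem~\ref{main1}, $\calC_{N(0,1)}$ extends to an analytic function on $\C^-$ with $\im\big(\calC_{N(0,1)}'(w)\big)\leqq0$ for all $w\in\C^-$. Now $N(\xi,\sigma^2)$ is the pushforward of $N(0,1)$ under the map $t\mapsto\sigma t+\xi$, i.e.\ $N(\xi,\sigma^2)=\delta_\xi\boxplus\DD_\sigma(N(0,1))$, since free additive convolution with the point mass $\delta_\xi$ is ordinary translation by $\xi$. Using the elementary transformation rules $\calC_{\delta_\xi\boxplus\nu}(w)=\xi w+\calC_\nu(w)$ and $\calC_{\DD_\sigma(\nu)}(w)=\calC_\nu(\sigma w)$ (the latter coming from $\calR_{\DD_\sigma(\nu)}(w)=\sigma\calR_\nu(\sigma w)$), one computes
\[
\calC_{N(\xi,\sigma^2)}'(w)=\xi+\sigma\,\calC_{N(0,1)}'(\sigma w),\qquad(w\in\C^-).
\]
Since $\sigma>0$ carries $\C^-$ into itself, the right-hand side defines an analytic function on all of $\C^-$, whose imaginary part there equals $\sigma\,\im\big(\calC_{N(0,1)}'(\sigma w)\big)\leqq0$ (here $\xi\in\R$ drops out of the imaginary part and $\sigma>0$ preserves its sign). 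By the implication (ii) $\Rightarrow$ (i) of Theorem~\ref{main1}, we conclude that $N(\xi,\sigma^2)\in L(\boxplus)$.

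There is essentially no obstacle here: once Theorem~\ref{main2} is in hand, this is a one-line verification. The only points to record with a little care are the transformation rules for $\calC_\mu$ under dilation and translation — alternatively one can chase the definition of free selfdecomposability directly, observing that if $\mu=\DD_c(\mu)\boxplus\rho_c$ for every $c\in(0,1)$, then $\delta_\xi\boxplus\DD_\sigma(\mu)=\DD_c\big(\delta_\xi\boxplus\DD_\sigma(\mu)\big)\boxplus\big(\delta_{(1-c)\xi}\boxplus\DD_\sigma(\rho_c)\big)$, using $\DD_c(\delta_\xi)=\delta_{c\xi}$ and the fact that $\DD_c$ commutes with $\boxplus$ — and the point that positivity of $\sigma$ is exactly what keeps $\C^-$ invariant and the sign of the imaginary part unchanged.
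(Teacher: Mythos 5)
Your argument is correct and is essentially the paper's own proof: the case $c=0$ of Theorem~\ref{main2} gives $N(0,1)\in L(\boxplus)$, and the general case follows from closure of $L(\boxplus)$ under scalings and translations. The paper simply asserts this closure property, whereas you verify it (correctly, in two ways: via the transformation rules for $\calC_\mu$ combined with Theorem~\ref{main1}, and directly from the definition of free selfdecomposability).
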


\begin{proof}
If $\xi=0$ and $\sigma^2=1$, this corresponds to the case $c=0$ in
Theorem~\ref{main2}. The general case subsequently follows from the
fact that $L(\boxplus)$ is closed under scalings and translations.
\end{proof}

\begin{rem}
Let $\xi$ and $\sigma$ be a real and a positive number, respectively.
For any $t$ in $(0,\infty)$ the probability measure
$N(\xi,\sigma^2)^{\boxplus t}$ may be defined as the law at time $t$
of a free L\'evy process $(X_t)$ such that $X_1$ has law
$N(\xi,\sigma^2)$. In particular the free L\'evy measure
for $N(\xi,\sigma^2)^{\boxplus t}$ is $t\nu$, with $\nu$ the free
L\'evy measure of $N(\xi,\sigma^2)$, and hence
$N(\xi,\sigma^2)^{\boxplus t}$ is FSD as well. In particular this
implies that $N(\xi,\sigma^2)^{\boxplus t}$ is unimodal (cf.\
\cite{HaTh2015}).
\end{rem}

\appendix

\section{Proofs of various technical lemmas}

\begin{proofof}{\bf Proof of Lemma~\ref{lem1b}.}

(i) \ We initially put
$
\|f\|_{\infty}:=\sup_{x\in[a,b]}|f(x)|<\infty.
$
For $u$ in $[a,b]$ and $v$ in $(0,1)$ we then have that
\begin{align*}
\Big|\int_{a}^{b}f(x)\log((x-u)^2+v^2)\,dx
&-\int_{a}^{b}f(x)\log((x-u)^2)\,dx\Big|
\\[.2cm]
&=\Big|\int_{a-u}^0f(x+u)\log\big(\tfrac{x^2+v^2}{x^2}\big)\,dx
+\int_0^{b-u}f(x+u)\log\big(\tfrac{x^2+v^2}{x^2}\big)\,dx\Big|
\\[.2cm]
&\leqq\int_{a-b}^0\|f\|_{\infty}\log\big(\tfrac{x^2+v^2}{x^2}\big)\,dx
+\int_0^{b-a}\|f\|_{\infty}\log\big(\tfrac{x^2+v^2}{x^2}\big)\,dx
\\[.2cm]
&=2\|f\|_{\infty}\int_0^{b-a}\log\big(\tfrac{x^2+v^2}{x^2}\big)\,dx. 
\end{align*}
The resulting expression does not depend on $u$ and tends to 0 as $v\downarrow0$ by dominated convergence with the dominating function $\log(\tfrac{x^2+1}{x^2})$ for all $v\in(0,1)$. 


(ii) \ Recall that $\arg$ denotes the standard continuous argument
function on $\C\setminus\{iy\mid y\leqq0\}$, and therefore
\[
\int_{a}^{b}f(x)\arg((u-x)+iv)\,dx
=\int_{a}^{u}f(x)\arctan\big(\tfrac{v}{u-x}\big)\,dx
+\int_{u}^{b}f(x)\big(\pi-\arctan\big(\tfrac{v}{x-u}\big)\big)\,dx
\]
for any $u$ in $[a,b]$ and $v$ in $[0,\infty)$. Note further that
\[
\Big|\int_{a}^{u}f(x)\arctan\big(\tfrac{v}{u-x}\big)\,dx\Big|
\leqq\int_0^{u-a}\|f\|_{\infty}\big|\arctan\big(\tfrac{v}{y}\big)\big|\,dy
\leqq
\|f\|_{\infty}\int_0^{b-a}\big|\arctan\big(\tfrac{v}{y}\big)\big|\,dy,
\]
and similarly
\begin{align*}
\Big|\int_{u}^{b}f(x)\big(\pi
-\arctan\big(\tfrac{v}{x-u}\big)\big)\,dx
-\pi\big(F(b)-F(u)\big)\Big|
&\leqq\int_0^{b-u}\|f\|_{\infty}\big|\arctan\big(\tfrac{v}{y}\big)\big|\,dy
\\[.2cm]
&\leqq \|f\|_{\infty}\int_0^{b-a}\big|\arctan\big(\tfrac{v}{y}\big)\big|\,dy.
\end{align*}
Since $\int_0^{b-a}|\arctan\big(\tfrac{v}{y}\big)|\,dy$
does not depend on $u$ and converges to 0, as $v\downarrow0$, by
dominated convergence, the estimates above verify (ii).

(iii) \ If $0\notin[a,b]$, then
$\sup_{x\in[a,b]}|\log((u-x)^2+v^2)|<\infty$ for $|u+iv|$ small enough,
and hence the assertion follows by dominated convergence. We may thus
assume in the following that $0\in[a,b]$, and we establish only that
\[
\int_0^bf(x)\log((u-x)^2+v^2)\,dx\longrightarrow
\int_0^bf(x)\log(x^2)\,dx \qquad\text{as $u+iv\to0$ from $\C^+$,}
\]
since it follows symmetrically that 
$\int_a^0f(x)\log((u-x)^2+v^2)\,dx\to\int_a^0f(x)\log((u-x)^2+v^2)\,dx$
as $u+iv\to0$ from $\C^+$.
For $u$ in $(0,\infty)$ we note first that
\begin{equation}
\begin{split}
\Big|\int_0^bf(x)\log((x-u)^2&+v^2)\,dx-\int_0^bf(x)\log(x^2)\,dx\Big|
\\[.2cm]
&=\Big|\int_{-u}^{b-u}f(x+u)\log(x^2+v^2)\,dx-\int_0^bf(x)\log(x^2)\,dx\Big|
\\[.2cm]
&\leqq\Big|\int_{-u}^0f(x+u)\log(x^2+v^2)\,dx\Big|
+\Big|\int_0^{b-u}\big(f(x+u)-f(x)\big)\log(x^2+v^2)\,dx\Big|
\\
&\phantom{\leqq\Big|\int_{-u}^0}
+\Big|\int_0^{b-u}f(x)\big(\log(x^2+v^2)-\log(x^2)\big)\,dx\Big| 
+\Big|\int_{b-u}^bf(x)\log(x^2)\,dx\Big|.
\label{lem1b_eq0}
\end{split}
\end{equation}
Assuming henceforth that $u^2+v^2\leqq\tfrac{1}{2}\wedge b$, we have here that
\begin{equation}
\Big|\int_{-u}^0f(x+u)\log(x^2+v^2)\,dx\Big|
\leqq\|f\|_\infty\int_{-u}^0-\log(x^2+v^2)\,dx
\leqq-\|f\|_\infty\int_{-u}^0\log(x^2)\,dx\longrightarrow0,
\label{lem1b_eq1}
\end{equation}
as $u\downarrow0$, by dominated convergence. Similarly we find that
\begin{equation}
\Big|\int_{b-u}^bf(x)\log(x^2)\,dx\Big|\leqq\|f\|_\infty\int_{b-u}^b|\log(x^2)|\,dx
\longrightarrow0, \qquad\text{as $u\downarrow0$.}
\label{lem1b_eq2}
\end{equation}
We note further that
\begin{equation}
\begin{split}
\Big|\int_0^{b-u}&\big(f(x+u)-f(x)\big)\log(x^2+v^2)\,dx\Big|
\leqq\Big(\sup_{x\in[0,b-u]}\big|f(x+u)-f(x)\big|\Big)
\int_0^{b}|\log(x^2+v^2)|\,dx
\\[.2cm]
&\leqq\Big(\sup_{x\in[0,b-u]}\big|f(x+u)-f(x)\big|\Big)
\Big(\int_0^{b\wedge\frac{1}{2}}-\log(x^2)\,dx
+\int_{b\wedge\frac{1}{2}}^b|\log(b^2\wedge\tfrac{1}{4})|\vee
|\log(b^2+\tfrac{1}{2})|\,dx\Big)
\\[.2cm]
&\longrightarrow 0, \qquad\text{as $u\downarrow0$},
\label{lem1b_eq3}
\end{split}
\end{equation}
since the supremum goes to 0 as $u\downarrow0$, by uniform continuity
of $f$, and since both integrals in the resulting expression are finite.
Note finally that
\begin{equation}
\Big|\int_0^{b-u}f(x)\big(\log(x^2+v^2)-\log(x^2)\big)\,dx\Big| 
\leqq\|f\|_\infty\int_0^b\log\big(\tfrac{x^2+v^2}{x^2}\big)\,dx
\longrightarrow 0, \qquad\text{as $v\downarrow0$},
\label{lem1b_eq4}
\end{equation}
since the integral in the resulting expression goes to 0 as
$v\downarrow0$ as seen in the proof of (i). Combining
\eqref{lem1b_eq0}-\eqref{lem1b_eq4}, it follows that  
$\int_0^bf(x)\log((u-x)^2+v^2)\,dx\to\int_0^bf(x)\log(x^2)\,dx$
as $u+iv\to0$ from $(0,\infty)+i(0,\infty)$. A similar argumentation
establishes the same convergence when $u+iv\to0$ from
$(-\infty,0)+i(0,\infty)$. 
\end{proofof}

\begin{proofof}{\bf Proof of Lemma~\ref{lem4}.}
\eqref{lem4-1} \ We must show that  $\int_{\R}(1\wedge x^2)\frac{k(x)}{|x|}\,dx<\infty$.
We note first that
\begin{align*} 
\int_{-1}^1x^{2}\frac{k(x)}{|x|}dx =\int_0^1 x k(x)\,dx
+\int_{-1}^0 |x|k(x)\,dx,
\end{align*}
where, by Tonelli's Theorem,
\begin{align*}
\int_0^1 x k(x)\,dx
=\int_{0}^{1}\Big(\int_{x}^{\infty}x\frac{1+y^{2}}{y^{2}}\rho(dy)\Big)\,dx
=\int_{0}^{\infty}\frac{1+y^{2}}{y^{2}}\Big(\int_{0}^{y \wedge 1}x\,dx\Big)\,\rho(dy)
=\int_{0}^{\infty} \frac{(y \wedge 1)^{2}}{2y^{2}}(1+y^{2})\rho(dy)<\infty,
\end{align*}
since $\rho$ is a finite measure, and the function $y\mapsto\frac{(y
  \wedge 1)^{2}}{2y^{2}}(1+y^{2})$ is bounded on $(0,\infty)$. 
In the same manner, $\int_{-1}^0|x|k(x)\,dx<\infty$.
Note next that
\begin{align*}
\int_{\R\setminus[-1,1]}\frac{k(x)}{|x|}dx=\int_1^{\infty}\frac{k(x)}{x}dx 
+\int_{-\infty}^{-1}\frac{k(x)}{|x|}dx,
\end{align*}
where
\begin{align*}
\int_{1}^{\infty}\frac{k(x)}{x}dx 
=\int_{1}^{\infty}\frac{1}{x}\Big(\int_{x}^{\infty}
\frac{1+y^{2}}{y^{2}}\,\rho(dy)\Big)\,dx
=\int_{1}^{\infty}\frac{1+y^{2}}{y^{2}}
\Big(\int_{1}^{y}\frac{1}{x}\,dx\Big)\,\rho(dy)
=\int_{1}^{\infty} \log(y)\frac{1+y^{2}}{y^{2}}\,\rho(dy)
<\infty,
\end{align*}
since the function $y\mapsto\frac{1+y^{2}}{y^{2}}$ is bounded on
$[1,\infty)$, and $\int_{1}^{\infty}\log(y)\,\rho(dy)<\infty$.
In a similar way it follows that
$\int_{-\infty}^{-1}\frac{k(x)}{|x|}dx<\infty$, and this completes the proof
of \eqref{lem4-1}.

(ii) \ 
For any $\varepsilon$ in $(0,\infty)$, there exists $\delta$ in
$(0,1)$, such that 
$\int_{(0,\delta]}(1+y^{2})\rho(dy)\leqq2\rho((0,\delta])\leqq\varepsilon$.
Since $\rho$ is finite we have that
$\int_{\delta}^{\infty}y^{-2}(1+y^2)\,\rho(dy)<\infty$, and we can thus
choose $\gamma$ in $(0,\infty)$, such that 
$\gamma^2\int_{\delta}^{\infty}y^{-2}(1+y^2)\,\rho(dy)\leqq\epsilon$.
Now for any $x$ in $(0,\delta\wedge\gamma)$ we find that
\begin{align*}
x^{2}k(x)	
=x^{2}\int_{x}^{\delta}\frac{1+y^{2}}{y^{2}}\rho(dy)
+x^{2}\int_{\delta}^{\infty}\frac{1+y^{2}}{y^{2}}\rho(dy)
\leqq \int_{x}^{\delta}y^{2}\frac{1+y^{2}}{y^{2}}\rho(dy)
+\gamma^2\int_{\delta}^{\infty}\frac{1+y^{2}}{y^{2}}\rho(dy)
\leqq2\epsilon, 
\end{align*}
and this shows that $x^2k(x)\to0$ as $x\downarrow0$. 
In a similar way, it follows that $x^2k(x)\to0$ as $x\uparrow0$, and
this completes the proof of (ii).

(iii) \ For $x$ in $[1,\infty)$ we note first that
\begin{align*}
|\log(x)k(x)| = \int_{x}^{\infty} \log(x)\frac{1+y^{2}}{y^{2}}\,\rho(dy)
\leqq\int_{x}^{\infty} \log(y)\frac{1+y^{2}}{y^{2}}\,\rho(dy)
\leqq\int_{x}^{\infty}2\log(y)\,\rho(dy)\longrightarrow 0,
\quad\text{as $x\to\infty$},
\end{align*}
since $\int_1^{\infty}\log(y)\,\rho(dy)<\infty$.
Similarly it follows that $\log(|x|)k(x)\to0$ as $x\to-\infty$. 

(iv) \ Recall that here $\log$ denotes the standard branch of the
logarithm on $\C\setminus(-\infty,0]$, and let $\arg$ denote the
corresponding argument function. For $z=u+iv$ in $\C^-$ we then have that
\begin{align*}
\Big|\log(1-zx)+\frac{xz}{1+x^2}\Big|k(x)
&=\Big|\tfrac{1}{2}\log((1-ux)^2+v^2x^2)+i\arg((1-ux)+ivx)
+\frac{xz}{1+x^2}\Big|k(x)
\\
&\leqq\tfrac{1}{2}\Big(\log(x^2)+\log((x^{-1}-u)^2+v^2)\Big)k(x)
+\Big(\pi+\frac{|xz|}{1+x^2}\Big)k(x),
\end{align*}
where the resulting expression tends to 0 as $|x|\to\infty$ by (iii).

By second order Taylor expansion we note next that
$\log(1-zx)=-zx-\frac{1}{2}z^2x^2+o(x^2)$, and therefore
\begin{equation*}
\log(1-zx)+\frac{xz}{1+x^2}
=-\frac{zx^3}{1+x^2}-\tfrac{1}{2}z^2x^2+o(x^2), \quad\text{as $x\to0$.}
\end{equation*}
Consequently,
\begin{equation*}
\Big(\log(1-zx)+\frac{xz}{1+x^2}\Big)k(x)
=\Big(-\frac{zx}{1+x^2}-\tfrac{1}{2}z^2+o(1)\Big)x^2k(x)
\longrightarrow0, \quad\text{as $x\to0$,}
\end{equation*}
by (ii). This completes the proof of (iv) and hence the proof of the
lemma.
\end{proofof}

\begin{proofof}{\bf Proof of Lemma~\ref{lem1}.} 
We consider initially the case $m=1$ and arbitrary $a',b'$ such that
$a<a'<b'<b$. It suffices then to show that $G_f$ can be extended to a
continuous function on $\C^+\cup(a',b')$.
For any $z$ in $\C^+$ we have that
\begin{align*}
G_{f}(z)=\int_{a}^{a'} \frac{f(x)}{z-x}\,dx
+\int_{a'}^{b'} \frac{f(x)}{z-x}\,dx+\int_{b'}^{b} \frac{f(x)}{z-x}\,dx
=:G_{1}(z)+G_{2}(z)+G_{3}(z).
\end{align*}
It is clear that $G_{1}$ and $G_{3}$ can be extended to analytic functions on 
$\C^{+}\cup (a',b')\cup\C^-$, and it remains then 
to prove that $G_{2}$ can be extended
to a continuous function on $\C^{+}\cup (a',b')$.
In the following we denote by $\log$ the standard continuous
branch of the logarithm on $\C\setminus\{iy\mid y\leqq0\}$.
Using integration by parts, we then obtain for $z=u+iv$ in
$\C^+$ that
\begin{align}
G_{2}(z)
=-f(b')\log(u+iv-b')+f(a')\log(u+iv-a')+\int_{a'}^{b'}f'(x)\log(u+iv-x)\,dx.
\label{lem1_eq1}
\end{align}
Here the first and second terms
$-f(b')\log(u+iv-b')+f(a')\log(u+iv-a')$ are analytic  
on $\C^{+}\cup(a',b')$
with respect to $z=u+iv$. Regarding the integral in \eqref{lem1_eq1}
an application of Lemma~\ref{lem1b}(i)-(ii) yields that
\begin{align*}
\int_{a'}^{b'}f'(x)\log(u+iv-x)\,dx
&=\tfrac{1}{2}\int_{a'}^{b'}f'(x)\log((x-u)^{2}+v^{2})\,dx
+i\int_{a'}^{b'}f'(x)\arg (u+iv-x)\,dx\\
&\longrightarrow \int_{a'}^{b'}f'(x)\log(|x-u|)\,dx+i\pi(f(b')-f(u))
\end{align*}
as $v\downarrow0$, uniformly w.r.t.\ $u\in(a',b')$.
From this it follows readily that $G_{2}$ can be extended to a
continuous function on $\C^{+}\cup(a',b')$, where
\begin{align*}
G_{2}(u) &= -f(b')\log(u-b')+f(a')
\log(u-a')+\int_{a'}^{b'}f'(x)\log(|x-u|)\,dx+i\pi(f(b')-f(u))
\\[.2cm]
&=-f(b')\log(b'-u)+f(a')\log(u-a')+\int_{a'}^{b'}f'(x)\log(|x-u|)\,dx
-i\pi f(u)
\end{align*}
for $u$ in $(a',b')$.

Suppose next that $m\geqq 2$, and that $f\in C^{m}((a,b))$. With $a',b'$
and $G_2$ as above, it suffices to show that the derivatives
$G_2',G_2'',\ldots,G_2^{(m-1)}$ can be extended to continuous functions
on $\C^+\cup(a',b')$. 
For any $n$ in $\{1,\ldots,m-1\}$ it follows by
induction and integration by parts that
\begin{align*}
G_2^{(n)}(z)
=\sum_{k=0}^{n-1}(n-1-k)!(-1)^{n-k}\left[
\frac{f^{(k)}(x)}{(z-x)^{n-k}}\right]_{x=a'}^{x=b'} 
+\int_{a'}^{b'}\frac{f^{(n)}(x)}{z-x}\,dx.
\end{align*}
From this expression and the preceding part of the proof, it follows
readily that $G_2',\ldots,G_2^{(m-1)}$ can be extended to continuous
functions on $\C^+\cup(a',b')$, as desired.   
\end{proofof}

\begin{proofof}[Proof of Lemma~\ref{approximation_lemma}.]
For each $n$ in $\N$ we introduce first the function
$k_n^0\colon(0,\infty)\to[0,\infty)$ defined by
\[
k_n^0(t)=
\begin{cases}
k(\tfrac{1}{n}), &\text{if $t\in(0,\frac{1}{n})$}\\
k(t), &\text{if $t\in[\frac{1}{n},n]$}\\
0, &\text{if $t\in(n,\infty)$},
\end{cases}
\]
and we note that $k_n^0\leqq k_{n+1}^0$ for all $n$.
Next we choose a non-negative function $\varphi$ from $C^{\infty}_c(\R)$, such that 
$\textrm{supp}(\varphi)\subseteq[-1,0]$, and
$\int_{-1}^0\varphi(t)\,dt=1$. We then define the function
$R_n\colon(0,\infty)\to[0,\infty)$ as the convolution
\begin{equation}
R_n(t)=n\int_{-1/n}^0k_n^0(t-s)\varphi(ns)\,ds
=\int_0^1k_n^0(t+\tfrac{u}{n})\varphi(-u)\,du, \qquad(t\in(0,\infty)).
\label{eq8}
\end{equation}
Since $k_n^0$ is a bounded, decreasing function, it follows
immediately from \eqref{eq8} that so is $R_n$. 
Moreover, $\textrm{supp}(R_n)\subseteq(0,n]$ by the
definition of $k_n^0$. Note also that
\[
R_n(t)=n\int_{0}^n\varphi(n(t-s))k_n^0(s)\,ds, \qquad(t\in(0,\infty)).
\]
Since $k_n^0$ as well as the derivatives of $\varphi$ are bounded
functions, it follows then by differentiation under the integral sign that
$R_n$ is a $C^{\infty}$-function on $(0,\infty)$ with \emph{bounded}
derivatives given by 
\[
R_n^{(p)}(t)=n^{p+1}\int_{0}^n\varphi^{(p)}(n(t-s))k_n^0(s)\,ds,
\qquad(p\in\N, \ t\in(0,\infty)).
\]
By dominated convergence it follows further for any $p$ in $\N_0$ that
\[
\lim_{t\downarrow0}R_n^{(p)}(t)=n^{p+1}\int_{0}^n\varphi^{(p)}(-ns)k_n^0(s)\,ds\in\R.
\]
For any $t$ in $(0,\infty)$ and $n$ in $\N$ note next that
\[
R_n(t)\leqq\int_0^1k_{n+1}^0(t+\tfrac{u}{n})\varphi(-u)\,du
\leqq\int_0^1k_{n+1}^0(t+\tfrac{u}{n+1})\varphi(-u)\,du
=R_{n+1}(t).
\]
Moreover, the monotonicity assumptions imply that $k$ is continuous at
almost all $t$ in $(0,\infty)$ (with respect to Lebesgue measure). For
such a $t$ we further consider $n$ so
large that $t+\frac{u}{n}\in[\frac{1}{n},n]$ for all $u$ in
$[0,1]$. For such $n$ it follows then that
\[
R_n(t)=\int_0^1k(t+\tfrac{u}{n})\varphi(-u)\,du
\xrightarrow[n\to\infty]{}
\int_0^1k(t)\varphi(-u)\,du=k(t)
\]
by monotone convergence. We conclude that $R_n(t)\uparrow k(t)$ as
$n\to\infty$ for almost all $t$ in $(0,\infty)$. 

Applying the considerations above to the function
$\kappa\colon(0,\infty)\to[0,\infty)$ given by $\kappa(t)=k(-t)$, it
follows that we can construct a sequence 
$(L_n)_{n\in\N}$ of non-negative functions defined on $(-\infty,0)$
and with the following properties:

\begin{itemize}

\item For all $n$ in $\N$ the function $L_n$ has bounded support.

\item For all $n$ in $\N$ we have that $L_n\in C^{\infty}((-\infty,0))$,
  and $L_n^{(p)}$ is bounded for all $p$ in $\N_0$.

\item For all $n$ in $\N$ the function $L_n$ is increasing on
  $(-\infty,0)$.

\item $L_n(t)\uparrow k(t)$ as $n\to\infty$ for almost all $t$ in
  $(-\infty,0)$ (with respect to Lebesgue measure).

\end{itemize}

We are now ready to define $k_n\colon\R\setminus\{0\}\to[0,\infty)$ by
\[
k_n(t)=
\begin{cases}
R_n(t), &\text{if $t>0$,}\\
L_n(t), &\text{if $t<0$.}
\end{cases}
\]
It is then apparent from the argumentation above that $k_n$ satisfies
the conditions (a)-(c) in the lemma, and it remains to
show that $\frac{|t|k_n(t)}{1+t^2}\,dt\to \frac{|t|k(t)}{1+t^2}\,dt$ weakly as $n\to\infty$. But for any
bounded continuous function $g\colon\R\to\R$ we find that
\begin{equation*}
\begin{split}
\int_{\R}g(t)\frac{|t|k_n(t)}{1+t^2}\,dt
&=
\int_{-\infty}^0g(t)\frac{|t|L_n(t)}{1+t^2}\,dt
+\int_0^{\infty}g(t)\frac{tR_n(t)}{1+t^2}\,dt
=
\int_{-\infty}^0g(t)\frac{|t|L_n(t)}{1+t^2}\,dt
+\int_0^{\infty}g(t)\frac{tR_n(t)}{1+t^2}\,dt
\\[.2cm]
&\xrightarrow[n\to\infty]{}
\int_{-\infty}^0g(t)\frac{|t|k(t)}{1+t^2}\,dt
+\int_0^{\infty}g(t)\frac{tk(t)}{1+t^2}\,dt
=
\int_{\R}g(t)\frac{|t|k(t)}{1+t^2}\,dt,
\end{split}
\end{equation*}
where, when letting $n\to\infty$, we used dominated convergence on
each of the integrals; note in particular that
$\frac{|t|L_n(t)}{1+t^2}$ and $\frac{tR_n(t)}{1+t^2}$ are dominated
almost everywhere by $\frac{|t|k(t)}{1+t^2}$ on the relevant
intervals, and here $\int_{\R}\frac{|t|k(t)}{1+t^2}\,dt<\infty$, since
$\frac{k(t)}{|t|}\,dt$ is a L\'evy measure. This completes the proof.
\end{proofof}

\subsection*{Acknowledgments}

T.\ Hasebe was supported by JSPS Grant-in-Aid for Young Scientists (B)
15K17549. N.\ Sakuma was supported by JSPS Grant-in-Aid for Scientific
Research (C) 15K04923. Part of this work was done while the second
author was visiting The University of Aarhus.  He sincerely appreciates the hospitality of The University of Aarhus. 
The authors are grateful to Octavio Arizmendi for a useful suggestion.


\begin{thebibliography}{9999}

\bibitem{Akh1965}
Akhiezer, Naum~I. {\it The Classical Moment Problem}. Oliver \& Boyd Ltd
(1965).

\bibitem{An2003}
Anshelevich, Michael. 
Free martingale polynomials. 
{\it J. Funct. Anal}. {\bf 201} (2003), no. 1, 228--261.

\bibitem{AsWi1984} Askey, Richard\ and Wimp, Jet. Associated Laguerre and
  Hermite polynomials. {\it Proc.\ Royal Soc.~Edinburgh} {\bf 96A}
  (1984), 15--37.

\bibitem{ArHa2013}
Arizmendi, Octavio and Hasebe, Takahiro. 
On a class of explicit Cauchy-Stieltjes transforms related to monotone
stable and free Poisson laws.  
{\it Bernoulli} {\bf 19} (2013), no. 5B, 2750--2767.

\bibitem{BNTh2002}
Barndorff-Nielsen, Ole E. and Thorbj{\o}rnsen, Steen.
Self-decomposability and L{\'e}vy processes in free probability. 
{\it Bernoulli} {\bf 8} (2002), no. 3, 323--366. 

\bibitem{BNTh2004}
Barndorff-Nielsen, Ole E. and Thorbj{\o}rnsen, Steen.
A connection between free and classical infinite divisibility.
{\it Infin. Dimens. Anal. Quantum Probab. Relat. Top.} {\bf 7} (2004), no. 4, 573--590. 


\bibitem{BBLS2011}
Belinschi, Serban T., Bo{\.z}ejko, Marek, Lehner, Franz and Speicher, Roland.
The normal distribution is $\boxplus$-infinitely divisible.
{\it Adv. Math.} {\bf 226} (2011), no. 4, 3677--3698. 

\bibitem{BG2006}
Benaych-Georges, Florent. 
Taylor expansions of R-transforms: application to supports and moments. 
{\it Indiana Univ. Math. J.} {\bf 55} (2006), no. 2, 465--481.

\bibitem{BeVo1993}
Bercovici, Hari and Voiculescu, Dan.
Free convolution of measures with unbounded support. 
{\it Indiana Univ. Math. J.} {\bf 42} (1993), no. 3, 733--773.

\bibitem{BePa1999}
Bercovici, Hari and Pata, Vittorino.
Stable laws and domains of attraction in free probability theory.
With an appendix by Philippe Biane. 
{\it Ann. of Math. (2)} {\bf 149} (1999), no. 3, 1023--1060. 

\bibitem{BB2006}
Bo{\.z}ejko, Marek and Bryc, W{\l}odzimierz. 
On a class of free L{\'e}vy laws related to a regression problem. 
{\it J. Funct. Anal.} {\bf 236} (2006), no. 1, 59--77.

\bibitem{CG2008} 
Chistyakov, Gennadii P. and Goetze, Friedrich. 
Limit theorems in free probability theory. I.
{\it Ann. Probab.} {\bf 36}, no. 1 (2008), 54--90.

\bibitem{Fe} Feller, William. {\it An introduction to probabililty
    theory and its applications} II, second edition. Wiley Series in Probability and
  Mathematical Statistics, John Wiley \& Sons, New York (1971).

\bibitem{GK68} Gnedenko, Boris Vladimirovich and Kolmogorov, Andrei
  Nikolaevich.  
{\it Limit Distributions for Sums of Independent Random Variables}. 
Addison-Wesley Publishing Company, Inc., Reading, Mass.-London-Don
Mills., Ont. (1968).


\bibitem{HT7} 
Haagerup, Uffe and Thorbj{\o}rnsen, Steen. 
On the free gamma distributions. 
{\it Indiana Univ. Math.\ J.} {\bf 63}
  (2014), no.~4, 1159--1194.


\bibitem{HaSa2016}
Hasebe, Takahiro and Sakuma, Noriyoshi.
Unimodality for free L{\'e}vy processes. 
{\it  Ann. Inst. Henri Poincar{\'e} Probab. Stat.} {\bf53} (2017), no. 2, 916--936.

\bibitem{HaTh2015}
Hasebe, Takahiro and Thorbj{\o}rnsen, Steen.
Unimodality of the freely selfdecomposable probability laws. 
 {\it J. Theoret. Probab.} {\bf 29} (2016), no. 3, 922--940. 

\bibitem{Ke1998} Kerov, Sergei. 
Interlacing measures. In: Kirillovs
  Seminar on Representation Theory, Amer.\ Math.\ Soc.\ Transl.\ vol.\
  181, Amer.\ Math.\ Soc.\ (1998), pp.~35-83.
  
\bibitem{NiSpBook}
Nica, Alexandru and Speicher, Roland.
{\it Lectures on the combinatorics of free probability}. 
London Mathematical Society Lecture Note Series, {\bf 335}. 
Cambridge University Press, Cambridge, 2006. xvi+417
 
\bibitem{PAS}
P{\'e}rez-Abreu, V\'ictor and Sakuma, Noriyoshi.
Free generalized gamma convolutions. 
{\it Electron. Commun. Probab.} {\bf 13} (2008), 526--539.

\bibitem{SaYo2001}
Saitoh, Naoko and Yoshida, Hiroaki.
The infinite divisibility and orthogonal polynomials with a constant recursion formula in free probability theory.
{\it Probab. Math. Statist}. {\bf 21} (2001), no. 1, Acta Univ. Wratislav. No. 2298, 159--170.

\bibitem{Sato99}
Sato, Ken-iti.
{\it L{\'e}vy processes and infinitely divisible distributions.}
Translated from the 1990 Japanese original. Revised by the author. 
Cambridge Studies in Advanced Mathematics, {\bf 68}. Cambridge University Press, Cambridge, 1999. xii+486 pp. 

\bibitem{VDN}
Voiculescu, Dan, Dykema, Ken and Nica, Alexandru.
{\it Free random variables.} 
A noncommutative probability approach to free products with applications to random matrices, operator algebras and harmonic analysis on free groups. CRM Monograph Series, 1. American Mathematical Society, Providence, RI, 1992. vi+70 pp.
\end{thebibliography}
\end{document}